\newtheorem{theorem}{Theorem}[section]
\newtheorem{lemma}[theorem]{Lemma}
\newtheorem{corollary}[theorem]{Corollary}
\newtheorem{proposition}[theorem]{Proposition}
 \theoremstyle{definition}
 \newtheorem{remark}[theorem]{Remark}
 \newtheorem{example}[theorem]{Example}
\numberwithin{equation}{section}
\newcommand {\N}{\mathbb{N}} 
\newcommand {\Z}{\mathbb{Z}} 
\newcommand {\R}{\mathbb{R}} 
\newcommand {\Q}{\mathbb{Q}} 
\newcommand {\C}{\mathbb{C}} 
\newcommand{\T}{\mathbb{T}}
\newcommand{\FF}{\mathcal{F}}
   \DeclareMathOperator{\Unit}{U}
\DeclareMathOperator{\Id}{Id}
\DeclareMathOperator{\ann}{Ann}
\begin{document}
\title[Surjunctivity and topological rigidity]{Surjunctivity and topological rigidity of algebraic dynamical systems}
\author[S. Bhattacharya]{Siddhartha Bhattacharya}
\address{School of Mathematics,
Tata Institute of Fundamental Research, Mumbai 400005, India}
\email{siddhart@math.tifr.res.in}
\author[T.Ceccherini-Silberstein]{Tullio Ceccherini-Silberstein}
\address{Dipartimento di Ingegneria, Universit\`a del Sannio, C.so
Garibaldi 107, 82100 Benevento, Italy}
\email{tullio.cs@sbai.uniroma1.it}
\author[M. Coornaert]{Michel Coornaert}
\address{Universit\'e de Strasbourg, CNRS, IRMA UMR 7501, F-67000 Strasbourg, France}
\email{michel.coornaert@math.unistra.fr}
\subjclass[2010]{37D20,  22D45, 22E40, 54H20, 37A25}
\keywords{algebraic dynamical system, compact group, surjunctivity, expansivity, mixing, descending chain condition, topological rigidity,  entropy}
\begin{abstract}
Let $X$ be a compact  metrizable  group and  $\Gamma$  a countable group acting  on $X$ by continuous group automorphisms.
We give sufficient conditions under which the dynamical system $(X,\Gamma)$ is  surjunctive, i.e.,
every injective continuous map $\tau \colon X \to X$ commuting with the action of 
$\Gamma$  is surjective.
\end{abstract}
\date{\today}
\maketitle

\section{Introduction}

Consider a dynamical system $(X,\Gamma)$ consisting of a compact metrizable space $X$ equipped with a continuous action $\alpha$ of a countable group $\Gamma$.
This means that $\alpha$ is a group morphism from $\Gamma$ into the group of homeomorphisms of $X$.
To simplify notation, let us write $\alpha(\gamma)(x) = \gamma x$ for all $\gamma \in \Gamma$ and $x \in X$.
A map $\tau \colon X \to X$ is said to be \emph{$\Gamma$-equivariant} if it commutes with the action of $\Gamma$ on $X$, i.e., 
$\tau(\gamma x) = \gamma \tau(x)$ for all $\gamma \in \Gamma$ and $x \in X$.
Following a terminology introduced by Gottschalk in \cite{gottschalk},
we say that the dynamical system $(X,\Gamma)$ is \emph{surjunctive} if every 
injective $\Gamma$-equivariant continuous map
$\tau \colon X \to X$ is surjective (and hence a homeomorphism).
\par
There are several important classes of dynamical systems that are known to be surjunctive.
\par
First note that the dynamical system  $(X,\Gamma)$ is surjunctive whenever  the phase space $X$ is \emph{incompressible}, i.e., there is no proper subset of $X$ 
that is  homeomorphic to $X$.
This is for example the case when $X$ is a closed topological manifold (e.g. a compact Lie group) by Brouwer's invariance of domain.
\par
Another class of surjunctive dynamical systems is provided  by the systems that satisfy the descending chain condition.
One says that the dynamical system  $(X,\Gamma)$ satisfies the \emph{descending chain condition} (\emph{d.c.c.}\ for short)  if every decreasing sequence
$$
X = X_0 \supset X_1 \supset X_2 \supset \dots
$$
of $\Gamma$-invariant closed subsets $X_i \subset X$ ($i \in \N$) eventually stabilizes, i.e., there is an integer $k \geq 0$ such that
$X_i = X_k$ for all $i \geq k$.
Indeed, suppose that $(X,\Gamma)$ satisfies the d.c.c.\ and $\tau \colon X \to X$ is an injective 
$\Gamma$-equivariant continuous map.
  By applying the d.c.c.\ to the sequence
  $$
  X= \tau^0(X) \supset \tau(X) = \tau^1(X) \supset \tau^2(X) \supset \dots,
  $$
  we see that there is an integer $k \geq 0$ such that
  $\tau^k(X) = \tau^{k + 1}(X)$.
Since $\tau^k$ is injective, this implies 
$X = \tau(X)$, showing that $\tau$ is surjective.
  Note that a minimal dynamical system
  (i.e., a system containing no proper invariant closed subsets)
   trivially satisfies the d.c.c.\
  Actually every $\Gamma$-equivariant continuous map $\tau \colon X \to X$ is surjective when 
  $(X,\Gamma)$ is minimal.
  More generally, $(X,\Gamma)$ satisfies the d.c.c.\ if $X$ contains only finitely many closed 
  $\Gamma$-invariant subsets or if every proper closed $\Gamma$-invariant subset of $X$ is finite. 
  \par
There are also several classes of symbolic dynamical systems that are known to be surjunctive. Let $S$ be a  compact metrizable 
space, called the \emph{alphabet} or the \emph{space of symbols}.
   Given a countable group $\Gamma$,  the 
  $\Gamma$-\emph{shift} on the alphabet $S$ is the dynamical system   $(S^\Gamma, \Gamma)$,
  where $S^\Gamma = \{x \colon \Gamma \to S\}$ is equipped with the product  topology  and the action of $\Gamma$ on 
  $S^\Gamma$ is given by the formula $(\gamma x)(\gamma') \coloneqq x(\gamma^{-1} \gamma')$ for 
  all $\gamma,\gamma' \in \Gamma$ and $x \in S^\Gamma$. 
These shift systems are not surjunctive in general.
Indeed, suppose  for example that the alphabet space $S$ is compressible (e.g. $S$ is the closed unit interval $[0,1] \subset \R$,
or the Cantor set $K \subset [0,1]$, or the infinite-dimensional torus
$\T^\N$ with $\T \coloneqq \Unit(1)$ the unit circle group) and let $\iota \colon S \to S$ be an injective continuous map that is not surjective. 
Then the map $\tau \colon S^\Gamma \to S^\Gamma$ given by
$\tau(x)(\gamma) \coloneqq \iota(x(\gamma))$ for all $x \in S^\Gamma$ and $\gamma \in \Gamma$,
is clearly   injective, $\Gamma$-equivariant, and continuous.
However, $\tau$ is  not surjective since $\iota$ is not.
Thus, the shift system $(S^\Gamma,\Gamma)$ is never surjunctive when the alphabet  $S$ is compressible.
If $M$ is a closed topological manifold and the group $\Gamma$ is residually finite, 
it was observed in \cite[Corollary~7.8]{csc-concrete} that  the 
$\Gamma$-shift
$(M^\Gamma,\Gamma)$ is surjunctive.
 On the other hand, it follows from a deep theorem of Gromov~\cite{gromov-esav} and 
 Weiss~\cite{weiss-sgds} that,   if $\Gamma$ is a sofic group and $S$ is a finite discrete space, then
 the $\Gamma$-shift  $(S^\Gamma,\Gamma)$  is surjunctive.
Sofic groups form a wide class of groups containing in particular all residually amenable  groups
 but it is not known if the Gromov-Weiss surjunctivity theorem remains valid  for all groups (\emph{Gottschalk conjecture}). Actually, there is no example of a non-sofic group up to now
 although  many experts in the field do believe in the existence of non-sofic groups.
 \par
 One says that a dynamical system  $(X,\Gamma)$ is \emph{expansive} if there exists a constant 
$\delta = \delta(X,\Gamma,d)  > 0$ such that, for every pair of distinct points $x,y \in X$, there exists an element
$\gamma = \gamma(x,y)  \in \Gamma$ such that
$d(\gamma  x,\gamma  y) \geq  \delta$. Here $d$ denotes a compatible metric on $X$.
The fact that this  definition does not depend on the choice of $d$
follows from the  compactness of $X$.
For instance, a shift system $(S^\Gamma,\Gamma)$ is expansive if and only if the alphabet $S$ is finite.
\par
Given a dynamical system $(X,\Gamma)$, a point $x \in X$ is said to be $\Gamma$-\emph{periodic}
if its $\Gamma$-orbit $\Gamma x \coloneqq  \{\gamma x : \gamma \in \Gamma\} \subset X$ is finite.
In~\cite[Proposition~5.1]{csc-ijm-2015}, it was observed that if $(X,\Gamma)$ is an expansive dynamical system whose periodic points are dense in  $X$,  then $(X,\Gamma)$ is surjunctive.
\par
Note that satisfying the d.c.c.\  is a hereditary property, in the sense that every subsystem of a  dynamical system satisfying the d.c.c.\ 
satisfies it as well. Expansivity is also hereditary 
but surjunctivity and density of periodic points are not. 
Consider for example the $\Z$-shift $(\{0,1\}^\Z,\Z)$
on the alphabet with two elements $0$ and $1$, 
and the  closed $\Z$-invariant subset  $\Sigma \subset \{0,1\}^\Z$ consisting of 
all bi-infinite sequences of $0$s and $1$s  with at most one chain of $1$s.
Then the continuous map $\tau \colon \Sigma \to \Sigma$ given by
$$
\tau(x)(n) =
\begin{cases}
1 & \text{if } (x(n),x(n+1)) = (0,1) \\
x(n) & \text{otherwise},
\end{cases}   
$$
is clearly  injective and commutes with the shift.
However, $\tau$ is not surjective since
a sequence of $0$s and $1$s where $1$ appears exactly once is in $\Sigma$ but not in the image 
of $\tau$.
Thus, the subsystem $(\Sigma,\Z)$ is not surjunctive.
On the other hand, 
periodic sequences are dense in $\{0,1\}^\Z$
while the only periodic sequences in $\Sigma$ are the two constant ones.
This example also shows that an expansive dynamical system may fail to be surjunctive.
\par
An \emph{algebraic dynamical system}
is a dynamical system of the form $(X, \Gamma)$, where $X$ is a compact metrizable topological group and $\Gamma$ is a countable group acting on $X$  by continuous group automorphisms.
Observe that if $S$ is a compact metrizable group, then the $\Gamma$-shift $(S^\Gamma,\Gamma)$ is an algebraic dynamical system for every countable group $\Gamma$. 
\par
Algebraic dynamical systems have been intensively studied in the last decades
(see  the monograph~\cite{schmidt-book}
the survey~\cite{lind-schmidt-survey-heisenberg},  and the references therein). As it was already observed by Halmos~\cite{halmos}  in the particular case $\Gamma = \Z$,
they provide a large supply of interesting examples for ergodic theory.
When the phase space $X$ of an algebraic dynamical system $(X,\Gamma)$ is abelian, the action of 
$\Gamma$ on $X$ induces a 
$\Z[\Gamma]$-module structure on $X$, where $\Z[\Gamma]$ denotes the integral group ring 
of $\Gamma$.
The Pontryagin dual $\widehat{X}$ of $X$, which is  a countable discrete abelian group,
is then endowed with a dual  $\Z[\Gamma]$-module structure.
This yields a one-to-one correspondence between the class of algebraic dynamical systems with abelian phase space and the class of countable  $\Z[\Gamma]$-modules.
This correspondence has been fruitfully used by translating topological properties of these  system into
algebraic properties of the associated dual $\Z[\Gamma]$-module
especially in the case $\Gamma = \Z^d$ 
(see e.g.~\cite{kitchens-schmidt-1989}, \cite{schmidt-book}, \cite{chung-li},
\cite{lind-schmidt-survey-heisenberg})
and revealed in particular amazing connections between the theory of algebraic dynamical systems and number theory.  
\par
The goal of the present  paper is to give a series of sufficient conditions for an algebraic dynamical system to be surjunctive. 
 The first of our  results is the following.

\begin{theorem}
\label{t:th1}
Let $X$ be a  compact metrizable  group equipped with an action of a countable group $\Gamma$ by continuous group automorphisms. 
Suppose that $X$ is connected with finite topological dimension
and that the action of $\Gamma$ on $X$ is expansive. 
Then the dynamical system $(X,\Gamma)$ is surjunctive.
\end{theorem}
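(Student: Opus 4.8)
\emph{Proof strategy.} The plan is to deduce the theorem from the surjunctivity criterion recalled above, namely \cite[Proposition~5.1]{csc-ijm-2015}, which asserts that an expansive system whose $\Gamma$-periodic points are dense is surjunctive. Since expansivity is part of the hypotheses, the entire task reduces to proving that the set $\Per(X,\Gamma)$ of periodic points is dense in $X$. This is precisely where connectedness and finite topological dimension must enter: the totally disconnected subsystem $(\Sigma,\Z)$ described in the introduction is expansive and yet has only two periodic points, so connectedness is exactly the feature that is missing there. I would therefore concentrate all effort on the density of periodic points.

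The first reduction I would carry out is to the \emph{abelian} case. The closure $\overline{[X,X]}$ of the commutator subgroup is a closed characteristic, hence $\Gamma$-invariant, subgroup of $X$, and by the structure theory of compact connected groups together with finite-dimensionality it is (up to a finite central factor) a product of finitely many compact simple Lie groups. The automorphism group of such a group has compact identity component and finite group of outer automorphisms, so the image of $\Gamma$ in it is relatively compact and the induced action on $\overline{[X,X]}$ is equicontinuous. Since expansivity is hereditary, this restricted action is also expansive; but an equicontinuous expansive system is finite, and a connected finite group is trivial. Hence $\overline{[X,X]}$ is trivial and $X$ is a compact connected finite-dimensional \emph{abelian} group carrying an expansive $\Gamma$-action.

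In the abelian case I would pass to the Pontryagin dual $\widehat{X}$, a countable $\Z[\Gamma]$-module which is torsion-free because $X$ is connected and of finite torsion-free rank because $\dim X < \infty$, while expansivity imposes a strong finiteness (summability) condition on the module. Because $\Gamma$ acts by automorphisms, a point of $X$ is periodic if and only if it is fixed by some finite-index subgroup $\Gamma' \leq \Gamma$, and each fixed-point set $\Fix(\Gamma')$ is a closed $\Gamma$-invariant subgroup; thus $\Per(X,\Gamma)=\bigcup_{\Gamma'}\Fix(\Gamma')$. By duality, the closure of this union is all of $X$ if and only if $\bigcap_{\Gamma'}\ann(\Fix(\Gamma'))=0$ in $\widehat{X}$, where $\ann(\Fix(\Gamma'))$ is the submodule generated by the elements $(\gamma-1)\chi$ with $\gamma\in\Gamma'$ and $\chi\in\widehat{X}$. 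Density of periodic points thus becomes the purely algebraic statement that no nonzero character survives in all of these quotients.

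The main obstacle I anticipate is exactly this last algebraic density statement: proving that torsion-freeness and finite rank of $\widehat{X}$, \emph{together with expansivity}, force the intersection of annihilators to be trivial. This is where the dynamics genuinely intervene, since for a general (non-expansive, or disconnected) action the conclusion fails. I would expect to exploit the full strength of expansivity here—for instance through the summable homoclinic structure or the associated fundamental-domain estimates that expansivity provides—to manufacture enough finite-index invariant subgroups whose fixed points accumulate on every point of $X$, and the finite-rank, torsion-free hypotheses are what should keep these finite-index data under control. Once density is secured, \cite[Proposition~5.1]{csc-ijm-2015} yields surjunctivity at once.
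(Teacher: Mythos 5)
Your proposal diverges from the paper's proof and, more importantly, its central step is left unproven and is genuinely doubtful at the stated level of generality. You reduce everything to showing that the $\Gamma$-periodic points are dense in $X$ and then invoking \cite[Proposition~5.1]{csc-ijm-2015}. But a periodic point is by definition a point whose stabilizer has finite index in $\Gamma$, so $\Per(X,\Gamma)=\bigcup_{\Gamma'}\Fix(\Gamma')$ runs only over \emph{finite-index} subgroups $\Gamma'\leq\Gamma$. The theorem is stated for an \emph{arbitrary} countable group $\Gamma$, which may have very few proper finite-index subgroups, or none at all; in the latter case $\Per(X,\Gamma)=\Fix(\Gamma)$ is a proper closed subgroup of the connected group $X$ (it cannot equal $X$, since the trivial action on an infinite compact group is not expansive), hence is nowhere dense. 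So the density statement you defer to the end --- which you yourself identify as ``the main obstacle'' --- is not a technical gap to be filled by ``summable homoclinic structure''; it is the whole theorem in a form that is likely false, or at best would require first ruling out expansive actions of such groups on solenoids. The paper only uses the periodic-points route for Theorem~\ref{t:th2}, where $\Gamma=\Z^d$ and Kitchens--Schmidt \cite[Corollary~7.4]{kitchens-schmidt-1989} supplies the density; that argument does not transfer here.

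The paper's actual proof of Theorem~\ref{t:th1} avoids periodic points entirely. It first quotes Lam \cite[Theorem~3.2]{lam} to conclude that $X$ is abelian (your reduction via $\overline{[X,X]}$ and equicontinuity of the induced action on the semisimple part is a reasonable sketch of the same fact, but the citation does the job cleanly). It then invokes a \emph{topological rigidity} theorem of Bhattacharya \cite[Corollary~1]{bhattacharya}: for an expansive action on a compact connected finite-dimensional abelian group, every $\Gamma$-equivariant continuous self-map $\tau$ is affine, $\tau(x)=a(x)+b$. Finally, Lemma~\ref{l:inj-endo-surj} shows by Pontryagin duality that an injective continuous endomorphism $a$ of a finite-dimensional compact connected abelian group is surjective: $\widehat{a}$ extends to a surjective, hence injective, endomorphism of the finite-dimensional $\Q$-vector space $\widehat{X}\otimes_\Z\Q$, so $\widehat{a}$ is injective and $a$ is surjective. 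This is where connectedness and finite dimension are actually used (the paper's examples $\T^\N$ and $\Z_p$ show both hypotheses are needed), not to control finite-index subgroups of $\Gamma$. To salvage your approach you would either have to restrict $\Gamma$ (e.g.\ to residually finite groups) or replace the density-of-periodic-points criterion by the rigidity argument.
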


\begin{remark}
\label{r:lam}
By a result of Lam \cite[Theorem~3.2]{lam},
if a compact connected metrizable group $X$ admits an expansive action of a countable group 
$\Gamma$ by continuous group automorphisms,  then $X$ is necessarily abelian.
Thus, every group $X$ that satisfies  the hypotheses of 
 Theorem~\ref{t:th1} must be  abelian.
 Finite-dimensional connected metrizable abelian groups are often called \emph{solenoids} in the literature.
 They include all finite-dimensional tori $\T^  n$ but also groups that are not Lie such as the
 $2$-\emph{adic solenoid}, which 
is the projective limit of an infinite sequence of circles, where each circle is wrapped twice around its predecessor.  
 By Pontryagin duality, solenoids are in one-to-one correspondance with finite-rank torsion-free abelian groups (see e.g.~\cite{morris} and
 Subsection~\ref{subsec:pontryagin} below).
\end{remark}

Our second result is the following.

\begin{theorem}
\label{t:th2}
Let $X$ be a (possibly non-abelian) compact metrizable group equipped with an action of  $\Z^d$ by continuous group automorphisms.
Suppose that the  action of $\Z^d$ on $X$ is expansive.
Then the dynamical system $(X,\Z^d)$ is surjunctive.
\end{theorem}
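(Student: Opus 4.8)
The strategy is to prove that the set of $\Z^d$-periodic points of $X$ is dense and then to apply the surjunctivity criterion \cite[Proposition~5.1]{csc-ijm-2015}, by which every expansive system with dense periodic points is surjunctive. Everything thus reduces to showing that, for an expansive action of $\Z^d$ by continuous automorphisms of a compact metrizable group $X$, the union $\bigcup_\Lambda \Fix(\Lambda)$ of the fixed-point subgroups $\Fix(\Lambda) = \{x \in X : \lambda x = x \text{ for all } \lambda \in \Lambda\}$, taken over all finite-index subgroups $\Lambda \le \Z^d$, is dense in $X$.

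I would begin with the abelian case. Pontryagin duality identifies $X$ with a countable module $M = \widehat X$ over the commutative Noetherian ring $R = \Z[\Z^d] = \Z[u_1^{\pm 1}, \dots, u_d^{\pm 1}]$, and, writing $\mathfrak a_\Lambda \subseteq R$ for the ideal generated by $\{u^\lambda - 1 : \lambda \in \Lambda\}$, duality turns $\Fix(\Lambda)$ into the annihilator of the submodule $\mathfrak a_\Lambda M$. Consequently $\bigcup_\Lambda \Fix(\Lambda)$ is dense if and only if $\bigcap_\Lambda \mathfrak a_\Lambda M = 0$, i.e. the canonical map $M \to \varprojlim_\Lambda M/\mathfrak a_\Lambda M$ is injective. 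By \cite{schmidt-book} expansiveness forces $M$ to be Noetherian with the complex variety $V(\mathfrak p)$ of each associated prime disjoint from the multiplicative unit torus $\{z \in (\mathbb{C}^*)^d : |z_1| = \dots = |z_d| = 1\}$; this ``hyperbolicity'' is exactly what makes the finite quotients $M/\mathfrak a_\Lambda M$ grow and the corresponding finite groups $\Fix(\Lambda)$ equidistribute, so that no nonzero element of $M$ survives in every $\mathfrak a_\Lambda M$. I would reduce the verification, via a filtration whose successive quotients are cyclic modules $R/\mathfrak p$, to the case $M = R/\mathfrak p$, where $\bigcap_\Lambda \mathfrak a_\Lambda(R/\mathfrak p) = 0$ follows from the disjointness of $V(\mathfrak p)$ from the roots of unity accumulating on that torus.

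Next I would remove the abelian hypothesis. Let $X^0$ be the identity component; it is a closed normal $\Z^d$-invariant subgroup, the induced action on it is again expansive, and since $X^0$ is connected Lam's theorem (Remark~\ref{r:lam}) forces it to be abelian, so the previous step shows its periodic points are dense. The quotient $Q = X/X^0$ is profinite; as an expansive algebraic action on a zero-dimensional group it is a group subshift over $\Z^d$, and its periodic points are dense as well. It then remains to lift periodic points through the extension $1 \to X^0 \to X \to Q \to 1$. Given a nonempty open set $U \subseteq X$, I would choose $x \in U$ whose image in $Q$ is fixed by a finite-index subgroup $\Lambda_1$; each $\lambda \in \Lambda_1$ then preserves the coset $xX^0$ and acts there, in the coordinate $xn \leftrightarrow n$, as the affine transformation $n \mapsto c(\lambda)\,\lambda(n)$ of $X^0$, where $c(\lambda) = x^{-1}\lambda(x) \in X^0$ defines a $1$-cocycle. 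A point $xn$ is $\Lambda_2$-periodic precisely when $n$ is a common fixed point of these commuting affine maps for $\lambda$ in some finite-index $\Lambda_2 \le \Lambda_1$, that is, when $(\Id - \lambda)(n) = c(\lambda)$ for all such $\lambda$; and I would solve this system, keeping $n$ near the identity so that $xn$ stays in $U$, using that expansiveness makes $\Id - \lambda$ surjective on $X^0$ for suitably chosen $\lambda$.

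The crux, and the step I expect to resist, is this final lifting: one must select a single finite-index subgroup $\Lambda_2$ that simultaneously periodizes the $Q$-component, renders the cocycle $c$ a coboundary over $X^0$, and produces affine fixed points clustering at the prescribed point $x$. Controlling $H^1(\Lambda_2, X^0)$ together with the operators $\Id - \lambda$ uniformly --- so that the periodic points of $X$ are not merely nonempty but dense --- is where the argument is most delicate; here the commutativity of $\Z^d$ and the Noetherian nature of $R$ are what supply the required uniform control.
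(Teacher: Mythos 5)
Your top-level strategy is exactly the paper's: establish that the $\Z^d$-periodic points are dense and then invoke \cite[Proposition~5.1]{csc-ijm-2015}, by which an expansive system with dense periodic points is surjunctive. The difference is that the paper's entire proof consists of two citations --- the density of periodic points for expansive algebraic $\Z^d$-actions on compact (possibly non-abelian) metrizable groups is precisely \cite[Corollary~7.4]{kitchens-schmidt-1989}, which the authors simply quote --- whereas you attempt to re-derive that density statement from scratch. If citing Kitchens--Schmidt is permissible, your proof collapses to the paper's and is complete after the first paragraph.

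As a self-contained argument, however, your re-derivation has a genuine gap, and it is the one you flag yourself: the lifting of periodic points through the extension $1 \to X^0 \to X \to X/X^0 \to 1$. Finding a finite-index $\Lambda_2$ for which the cocycle $\lambda \mapsto c(\lambda) = x^{-1}\lambda(x)$ becomes a coboundary \emph{with a solution $n$ arbitrarily close to the identity} is not a routine verification; it is essentially the content of \cite[Theorem~7.2]{kitchens-schmidt-1989}, whose proof relies on their structure theory for expansive actions (the descending chain condition, the splitting into finite-dimensional and zero-dimensional parts, and the Markov-subgroup description of the profinite quotient). Invoking ``expansiveness makes $\Id - \lambda$ surjective on $X^0$ for suitably chosen $\lambda$'' does not by itself control the norm of a preimage, which is what density (as opposed to mere existence) of periodic points requires. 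A secondary, smaller issue: in the abelian case the key assertion $\bigcap_\Lambda \mathfrak a_\Lambda (R/\mathfrak p) = 0$, for $\mathfrak p$ an associated prime whose complex variety avoids the unit torus, is stated as following ``from disjointness'' but is itself a nontrivial lemma (it is the heart of \cite[Theorem~11.2]{schmidt-book}); note that $V(\mathfrak p + \mathfrak a_\Lambda) = \emptyset$ only tells you the quotient $R/(\mathfrak p + \mathfrak a_\Lambda)$ is finite, not that the intersection of the $\mathfrak a_\Lambda(R/\mathfrak p)$ vanishes. So either cite Kitchens--Schmidt as the paper does, or be prepared to reproduce a substantial portion of their Sections 6--7.
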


As every shift over a finite alphabet is expansive and any subsystem of an expansive system is itself expansive,
an immediate consequence of Theorem~\ref{t:th2} is the following.

\begin{corollary}
Let $S$ be a (possibly non-abelian) finite discrete group and let $X \subset S^{\Z^d}$ be a closed subgroup that is invariant under the $\Z^d$-shift.
Then the algebraic dynamical system $(X,\Z^d)$, where the action of $\Z^d$ on $X$ is the one induced by restriction of the $\Z^d$-shift,  is surjunctive.
\qed 
\end{corollary}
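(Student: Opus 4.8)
The plan is to verify that the system $(X,\Z^d)$ satisfies the hypotheses of Theorem~\ref{t:th2} and then invoke that theorem directly. Since the paragraph preceding the statement already flags this as an immediate consequence, the substance resides entirely in Theorem~\ref{t:th2}, and the work here is only to confirm that each relevant structural property descends from the full shift $S^{\Z^d}$ to the closed invariant subgroup $X$. First I would record the ambient structure. Because $S$ is finite discrete it is compact and metrizable, and since $\Z^d$ is countable the product $S^{\Z^d}$ is a compact metrizable space for the product topology. Equipping $S^{\Z^d}$ with the coordinatewise group operation inherited from $S$ makes it a compact metrizable topological group, and the shift, which merely permutes coordinates via $(\gamma x)(\gamma') = x(\gamma^{-1}\gamma')$, respects this operation and is therefore an action of $\Z^d$ by continuous group automorphisms.

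Next I would transfer these properties to $X$. As a closed subgroup of the compact metrizable group $S^{\Z^d}$, the set $X$ is itself a compact metrizable group; since $X$ is moreover assumed $\Z^d$-invariant, the restriction of the shift defines an action of $\Z^d$ on $X$ by continuous group automorphisms. Thus $(X,\Z^d)$ is an algebraic dynamical system of exactly the type considered in Theorem~\ref{t:th2}.

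The only point requiring a small argument is expansivity of the restricted action. Here I would use that the full shift $(S^{\Z^d},\Z^d)$ is expansive whenever the alphabet $S$ is finite, a fact already recorded in the introduction, together with the hereditary nature of expansivity: an expansivity constant $\delta$ for the ambient system remains an expansivity constant for any subsystem, because the defining condition, namely the existence for each pair of distinct points $x,y$ of some $\gamma$ with $d(\gamma x,\gamma y) \geq \delta$, need only be checked for pairs lying in the subsystem. Hence the action of $\Z^d$ on $X$ is expansive.

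With these verifications in place, all hypotheses of Theorem~\ref{t:th2} hold for $(X,\Z^d)$, and that theorem yields surjunctivity. I do not expect a genuine obstacle: the proof is a routine descent of structural properties, and the only care needed is to confirm that compactness, metrizability, the group operation, the continuity and automorphy of the action, and expansivity all pass correctly from $S^{\Z^d}$ to the closed $\Z^d$-invariant subgroup $X$.
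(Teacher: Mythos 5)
Your proof is correct and follows exactly the paper's route: the full shift over a finite alphabet is expansive, expansivity passes to the closed $\Z^d$-invariant subgroup $X$, and Theorem~\ref{t:th2} then applies. The additional verifications you spell out (compactness, metrizability, the group structure on $X$, and that the restricted shift acts by continuous automorphisms) are the routine points the paper leaves implicit.
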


    One says that an algebraic  dynamical system $(X,\Gamma)$ satisfies the \emph{algebraic descending chain condition} (\emph{a.d.c.c.}\ for short)  
    if every decreasing sequence
$$
X = X_0 \supset X_1 \supset X_2 \supset \dots
$$
of $\Gamma$-invariant closed subgroups $X_i \subset X$ ($i \in \N$) eventually stabilizes.
In the case when $X$ is abelian, this condition is equivalent to the fact that its Pontryagin dual 
$\widehat{X}$ is Noetherian as a 
$\Z[\Gamma]$-module .
By a result of Kitchens and Schmidt \cite[Theorem~5.2]{kitchens-schmidt-1989},
every expansive algebraic dynamical system $(X,\Z^d)$ satisfies the algebraic descending chain condition.
On the other hand, if $X$ is any  infinite compact Lie group,   then the $\Z^d$-shift on $X^{\Z^d}$ satisfies the a.d.c.c.\ \cite[Theorem~3.2]{kitchens-schmidt-1989} without being expansive.
Our next result is the following.  

\begin{theorem}
\label{t:th3}
Let $X$ be a compact metrizable abelian group equipped with an action of  
 $\Z^d$ by continuous group automorphisms.
Suppose that $(X,\Z^d)$ satisfies the algebraic descending chain condition.
Then the dynamical system $(X,\Z^d)$ is surjunctive.
\end{theorem}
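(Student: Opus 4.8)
The plan is to pass to Pontryagin duality and the Haar measure, turning surjectivity into a measure-rigidity statement that does not require $\tau$ to be a group homomorphism. Write $M=\widehat X$; as recalled before the statement, the a.d.c.c.\ means precisely that $M$ is a Noetherian, hence finitely generated, $\Z[\Z^d]$-module. Let $\mu$ be the normalized Haar measure on $X$ and put $\nu\coloneqq\tau_*\mu$. Since $\tau$ is continuous, injective and $\Z^d$-equivariant, $\nu$ is a $\Z^d$-invariant Borel probability measure and $\tau$ is a measurable isomorphism from $(X,\B,\mu,\Z^d)$ onto $(X,\B,\nu,\Z^d)$. Surjectivity of $\tau$ will follow once we show $\nu=\mu$: the set $\tau(X)$ is closed, so $X\setminus\tau(X)$ is open; as $\nu(\tau(X))=1$ trivially, the equality $\nu=\mu$ gives $\mu(X\setminus\tau(X))=0$, and since Haar measure is positive on every nonempty open set we conclude $X\setminus\tau(X)=\emptyset$. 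The point of this reformulation is that the push-forward $\tau_*\mu$ makes sense for an arbitrary continuous map, sidestepping the fact that $\tau$ is not assumed to respect the group law.

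To force $\nu=\mu$ I would identify $\mu$ among the invariant measures by an invariant that $\nu$ necessarily shares. The natural one is the measure entropy $h(\,\cdot\,)=h(\,\cdot\,,\Z^d)$, a conjugacy invariant, so that $h(\nu)=h(\mu)$. When $0<h(\mu)<\infty$ this is decisive: for algebraic actions the Haar measure is a measure of maximal entropy, i.e.\ $h(\mu)=h_{\mathrm{top}}(X,\Z^d)$, so $\nu$ too is a measure of maximal entropy, and if one knows that the Haar measure is the \emph{unique} measure of maximal entropy then $\nu=\mu$, as required.

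The degenerate regimes $h(\mu)\in\{0,\infty\}$ require a separate, topological argument, and this is where the a.d.c.c.\ re-enters. For every finite-index subgroup $\Lambda\le\Z^d$ the fixed-point set $\Fix_\Lambda(X)=\{x\in X:\lambda x=x\text{ for all }\lambda\in\Lambda\}$ is a closed $\Z^d$-invariant subgroup whose dual is the module of $\Lambda$-coinvariants $M/\langle(\lambda-1)m:\lambda\in\Lambda,\,m\in M\rangle$; being finitely generated over the finite ring $\Z[\Z^d/\Lambda]$, it is finitely generated over $\Z$, so $\Fix_\Lambda(X)$ is a compact abelian Lie group, hence a closed manifold, hence incompressible. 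As $\Fix_\Lambda(X)$ is cut out by the equivariant condition $\lambda x=x$, equivariance and injectivity of $\tau$ give $\tau(\Fix_\Lambda(X))\subseteq\Fix_\Lambda(X)$, and incompressibility upgrades this to $\tau(\Fix_\Lambda(X))=\Fix_\Lambda(X)$. In the zero-entropy case every associated prime of $M$ has vanishing Mahler measure, its variety is a finite union of torsion cosets of subtori, and Laurent's theorem on torsion points then shows the periodic points $\bigcup_\Lambda\Fix_\Lambda(X)$ to be dense; combined with the previous equality this yields $\tau(X)\supseteq\overline{\bigcup_\Lambda\Fix_\Lambda(X)}=X$. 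The infinite-entropy (positive mean dimension) contributions I would control with the conjugacy invariant $\mdim(X,\Z^d)=\mdim(\tau(X),\Z^d)$, reducing the positive-dimensional factors to shifts over tori, which are surjunctive because $\Z^d$ is residually finite.

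The hard part is the measure-rigidity input of the second paragraph: that the Haar measure is the unique measure of maximal entropy for these possibly \emph{non-expansive} systems. For expansive algebraic $\Z^d$-actions this intrinsic ergodicity is known, but the theorem genuinely covers non-expansive examples, such as the dual of $\Z[\Z^2]/(1+u+v)$ (with $u,v$ generating $\Z^2$): its complex variety meets the unit torus $\T^2$, so the action is not expansive, and $1+u+v$ vanishes at only finitely many roots of unity, so its periodic points are \emph{not} dense. For such systems the topological argument of the third paragraph is unavailable, and the uniqueness of the measure of maximal entropy, presumably obtained from the mixing properties of the action, is exactly what must be established. The secondary difficulty, pervasive throughout, is that $\tau$ is not a homomorphism: the prime filtration of $M$ does not descend along $\tau$ and no quotient of $X$ inherits a map from $\tau$, which is precisely why the global, measure-theoretic route seems unavoidable and why a direct reduction to Theorems~\ref{t:th1} and~\ref{t:th2} fails.
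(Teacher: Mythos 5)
Your third paragraph already contains the paper's entire proof, but you bury it inside an unnecessary and ultimately broken entropy dichotomy. The paper's argument is exactly this: for each finite-index subgroup $\Lambda \le \Z^d$ the fixed-point group $X(\Lambda)=\Fix_\Lambda(X)$ is $\Z^d$-invariant, and by the a.d.c.c.\ its dual is a finitely generated $\Z[\Z^d]$-module on which $\Lambda$ acts trivially, hence a finitely generated abelian group, so $X(\Lambda)\cong \T^k\times F$ is a compact abelian Lie group, hence incompressible; equivariance gives $\tau(X(\Lambda))\subseteq X(\Lambda)$ and injectivity plus incompressibility upgrade this to equality. One then concludes by citing the theorem of Kitchens and Schmidt (\cite[Theorem~7.2]{kitchens-schmidt-1989}; see also \cite[Theorem~11.2]{schmidt-book}) that for a $\Z^d$-action satisfying the descending chain condition the union $\bigcup_\Lambda X(\Lambda)$ of periodic points is dense, so the closed set $\tau(X)$ contains a dense subset and equals $X$. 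No measure theory and no case analysis on $h(\mu)$ are needed. The genuine gap in your proposal is the main branch of your argument: uniqueness of the measure of maximal entropy for non-expansive algebraic $\Z^d$-actions is precisely what you cannot supply (you say so yourself), it is vacuous in the zero-entropy regime, and the infinite-entropy/mean-dimension paragraph is not an argument. So as written the proposal does not close.

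The claim you use to justify the detour is also false, and this is worth internalizing: for $f=1+u+v$ the system $X_f$ satisfies the a.d.c.c.\ (the ring $\Z[u^{\pm 1},v^{\pm 1}]$ is Noetherian), so its periodic points \emph{are} dense by the Kitchens--Schmidt theorem. Your heuristic --- that $1+u+v$ vanishes at only finitely many pairs of roots of unity --- only bounds the rank, i.e.\ the dimension of the torus factor $\T^k$ of each $X(\Lambda)$; it says nothing about the finite factor $F$, which is dual to the torsion of $\widehat{X}/I_\Lambda\widehat{X}$ and is typically enormous (its growth is governed by the positive Mahler measure of $f$), and it is these torsion points that accumulate everywhere in $X_f$. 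Once you grant the Kitchens--Schmidt density theorem, your fixed-point argument applied to every finite-index $\Lambda$, with no entropy distinction, is the complete and correct proof.
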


If  $(X,\Gamma)$ is an algebraic dynamical system, then the  Haar probability measure $\lambda_X$ on $X$ is $\Gamma$-invariant, i.e., $\lambda_X(\gamma A) = \lambda_X(A)$ for every measurable subset 
$A \subset X$ (this imediately follows from the uniqueness of Haar measure).
One says that $(X,\Gamma)$ is \emph{mixing} if
\begin{equation}
\label{e:def-mixing}
\lim\limits_{\gamma \to\infty}
\lambda_{X}(A_{1} \cap \gamma A_{2})
=\lambda_{X}(A_{1})\cdot\lambda_{X}(A_{2})
\end{equation}
for all measurable subsets $A_1,A_2 \subset X$ (here $\infty$ denotes the point at infinity in the 
one-point compactification of the discrete group $\Gamma$).
For example, the $\Gamma$-shift $(S^\Gamma,\Gamma)$ is mixing for any 
compact metrizable group $S$ and any countable group $\Gamma$  (just observe 
that~\eqref{e:def-mixing}  is obvious when 
$A_1, A_2 \subset S^\Gamma$ are cylinders).
\par
One says that a countable group $\Gamma$ satisfies the
\emph{$\ell^2$-zero-divisor conjecture} if the space $\ell^2(\Gamma)$ of square-summable complex-valued functions on $\Gamma$  is torsion-free as a $\C[\Gamma]$-module 
(see Section~\ref{sec:l2-zdc} below).
Linnell~\cite[Theorem~2]{linnell-zdc-pacific}  proved that every torsion-free elementary amenable group satisfies the $\ell^2$-zero-divisor conjecture.
We recall that the class of \emph{elementary amenable groups} is the smallest class of groups which contains all abelian  and all finite groups and is closed under extensions and    
directed unions. 
We shall prove the following.

\begin{theorem}
\label{t:th4}
Let $X$ be a compact connected  metrizable abelian group
equipped with an action of a countable group $\Gamma$ by continuous group automorphisms.
Suppose that the following conditions hold:
\begin{enumerate}[{\rm (i)}]
\item $\Gamma$ satisfies the $\ell^2$-zero-divisor conjecture
(e.g. $\Gamma$ is elementary amenable and torsion-free);
\item $(X,\Gamma)$ is mixing;
\item $\widehat{X}$ is a  Noetherian torsion $\Z[\Gamma]$-module.
\end{enumerate}
Then the dynamical system $(X,\Gamma)$ is surjunctive.
\end{theorem}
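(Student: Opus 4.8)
The plan is to pass to the Pontryagin dual $M=\widehat X$, a countable $\Z[\Gamma]$-module, and to translate the three hypotheses into algebra: connectedness of $X$ means $M$ is torsion-free as an abelian group; hypothesis (iii) is exactly the a.d.c.c.; and mixing (ii) is equivalent to the statement that every nonzero $m\in M$ has finite $\Gamma$-stabilizer (as one checks by expanding \eqref{e:def-mixing} on characters $\chi_m$, where $\int_X \chi_m\cdot\overline{\gamma\chi_{m'}}\,d\lambda_X$ equals $1$ if $\gamma m'=m$ and $0$ otherwise, so that the limit in \eqref{e:def-mixing} vanishes precisely when each $\Stab(m')$ is finite). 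The immediate difficulty is that an equivariant continuous $\tau$ need not be a group homomorphism, so it cannot be dualized; the argument must therefore be genuinely topological and ergodic, not pure commutative algebra over $\Z[\Gamma]$.

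Next I would fix an injective continuous $\Gamma$-equivariant $\tau\colon X\to X$ and set $Y\coloneqq\tau(X)$. Then $Y$ is a closed $\Gamma$-invariant subset and $\tau\colon X\to Y$ is a topological conjugacy of $\Gamma$-systems. Pushing forward Haar measure gives a $\Gamma$-invariant Borel probability measure $\mu\coloneqq\tau_*\lambda_X$ supported on $Y$, and $(Y,\mu,\Gamma)$ is measurably conjugate to $(X,\lambda_X,\Gamma)$; in particular, by (ii), $\mu$ is mixing and shares every measure-conjugacy invariant (notably entropy) with $\lambda_X$. Since $\lambda_X$ has full support, it suffices to prove $\mathrm{supp}(\mu)=X$, and I would aim for the stronger $\mu=\lambda_X$. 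Thus surjunctivity is reduced to a rigidity statement: the Haar system cannot be realized on a proper closed invariant subset.

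The $\ell^2$-dimension input comes next. Using Noetherianity I would choose a finite filtration $0=M_0\subset M_1\subset\cdots\subset M_k=M$ with cyclic successive quotients $M_i/M_{i-1}\cong \Z[\Gamma]/I_i$; torsionness forces each $I_i\neq 0$. Picking $0\neq f_i\in I_i$, hypothesis (i) guarantees that convolution by $f_i$ is injective on $\ell^2(\Gamma)$, so each quotient $\Z[\Gamma]/I_i$ has vanishing von Neumann dimension over $\LL(\Gamma)$; additivity of $\dim_{\LL(\Gamma)}$ along the filtration then shows that the system carries no ``rank part'' (and, when $\Gamma$ is amenable as in the examples, that $\mdim(X,\Gamma)=0$), with the entropy of $\lambda_X$ finite and computed by a Fuglede--Kadison determinant. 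For the amenable groups covered by (i) this identifies $\lambda_X$ as a measure realizing the topological entropy.

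The hard part will be the final rigidity step: upgrading ``$\mu$ is a mixing invariant measure with the same entropy as $\lambda_X$'' to ``$\mu=\lambda_X$''. The plan is to establish an intrinsic-ergodicity statement — that for a mixing, Noetherian, torsion algebraic system the Haar measure is the unique invariant measure of maximal entropy — using the zero-divisor conjecture to control entropy via determinants and mixing to rule out competing maximizers; then $\mu$, being of maximal entropy, must equal $\lambda_X$, forcing $Y\supseteq\mathrm{supp}(\lambda_X)=X$. I expect this uniqueness to be the crux, and the point where all three hypotheses are simultaneously needed. A possible alternative that avoids entropy is to use (i) to invert the $f_i$ in $\LL(\Gamma)$ and produce a dense subgroup of $\ell^2$-homoclinic points, and then, exploiting mixing and the finiteness of the fixed-point sets $\Fix(\Gamma_0)$ of finite-index subgroups, to deduce incompressibility directly; this route is cleanest when $\Gamma$ is residually finite, since the injective map $\tau$ is then a bijection of each finite set $\Fix(\Gamma_0)$, giving $\Fix(\Gamma_0)\subset\tau(X)$, and density of periodic points closes the argument.
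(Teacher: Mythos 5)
Your reduction to showing $\tau_*\lambda_X=\lambda_X$ is sound as far as it goes, but the step that would actually prove it --- that $\lambda_X$ is the unique measure of maximal entropy for a mixing, Noetherian, torsion algebraic action of a group satisfying the $\ell^2$-zero-divisor conjecture --- is exactly where the proposal stops: you announce it as ``the crux'' and offer only a plan (Fuglede--Kadison determinants, ruling out competing maximizers by mixing) with no argument. This is a genuine gap, and not a small one. First, the theorem does not assume $\Gamma$ amenable (free groups satisfy the $\ell^2$-zero-divisor conjecture by Linnell's work), so the entropy formalism you invoke --- topological entropy, the variational principle, Deninger's identification of topological and Haar-measure entropy --- is not even available in the stated generality. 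Second, even for amenable $\Gamma$, intrinsic ergodicity of such actions is not a known consequence of hypotheses (i)--(iii); the uniqueness results in the literature for measures of maximal entropy of algebraic actions require expansiveness or completely-positive-entropy-type hypotheses that are absent here. Your fallback via homoclinic and periodic points requires residual finiteness of $\Gamma$, which is also not assumed. Finally, the finiteness-of-stabilizers reformulation of mixing and the filtration with cyclic torsion quotients, while correct, are never actually used to close the argument.

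The paper's proof is much more direct and avoids invariant measures entirely. Conditions (i)--(iii) are first used to prove \emph{topological rigidity}: every $\Gamma$-equivariant continuous self-map of $X$ is affine (Corollary~\ref{c:rigidity-l2}, deduced from Theorem~\ref{t:rigidity-l2}). The mechanism is van Kampen's lifting theorem (Theorem~\ref{t:van-kampen-lift}): for a character $\xi$ of the target one writes $\xi\circ\tau(x)=t\,\chi(x)\,e^{2\pi i h(x)}$ with $h\in C_0(X,\R)$, uses the torsion hypothesis to find $0\neq p\in\Z[\Gamma]$ with $p\xi=0$, hence $ph=0$ by equivariance of the van Kampen decomposition, and then uses mixing together with the $\ell^2$-zero-divisor conjecture (via the Fourier transform and the restriction map $\ell^2(\widehat X)\to\ell^2(\Gamma)$ along a free orbit of a nontrivial character) to conclude $h=0$; thus $\xi\circ\tau$ is affine for every $\xi$, and hence $\tau$ is affine. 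Once $\tau=a+b$ with $a$ an injective continuous endomorphism, Noetherianity of $\widehat X$ gives the algebraic descending chain condition, and applying it to $X\supset a(X)\supset a^2(X)\supset\cdots$ forces $a$, hence $\tau$, to be surjective (Proposition~\ref{p:adcc-aff-surj} and Corollary~\ref{c:rigid-adcc-are-surj}). To salvage your measure-theoretic route you would need to prove a uniqueness-of-maximal-entropy theorem that is at least as hard as, and strictly less general than, the rigidity statement the paper actually establishes.
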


Finally, we shall establish the following.

\begin{theorem}
\label{t:poly-abel}
Let $X$ be a compact connected  metrizable abelian group
equipped with an action of a countable group $\Gamma$ by continuous group automorphisms.
Suppose that the following conditions are satisfied:
\begin{enumerate}[{\rm (i)}]
\item $\Gamma$ is torsion-free and either polycyclic-by-finite or abelian;
\item $(X,\Gamma)$ is mixing;
\item $\widehat{X}$ is a finitely generated torsion $\Z[\Gamma]$-module.
\end{enumerate}
Then the dynamical system $(X,\Gamma)$ is surjunctive.
\end{theorem}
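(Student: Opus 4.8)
The plan is to deduce both cases from Theorem~\ref{t:th4}, supplemented in the abelian case by a direct entropy argument. Condition~(i) of Theorem~\ref{t:th4} holds automatically under our hypothesis~(i): a torsion-free polycyclic-by-finite group is built from finitely many infinite cyclic groups and a finite group by repeated extension, hence is \emph{elementary amenable}, while every abelian group is \emph{elementary amenable} by definition; being moreover torsion-free, in either case $\Gamma$ satisfies the $\ell^2$-zero-divisor conjecture by Linnell's theorem \cite{linnell-zdc-pacific}. Condition~(ii) of Theorem~\ref{t:th4} is exactly our hypothesis~(ii). Thus everything reduces to bridging the gap between our hypothesis~(iii) (finitely generated torsion) and condition~(iii) of Theorem~\ref{t:th4} (Noetherian torsion). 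In both cases $\Gamma$ is amenable, which I shall use freely.

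When $\Gamma$ is polycyclic-by-finite this gap is empty: by a theorem of P.~Hall the integral group ring $\Z[\Gamma]$ is two-sided Noetherian, so every finitely generated $\Z[\Gamma]$-module is Noetherian. Hence $\widehat{X}$ is a Noetherian torsion module and Theorem~\ref{t:th4} applies verbatim, giving the surjunctivity of $(X,\Gamma)$.

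The abelian case is the main obstacle, because when $\Gamma$ is not finitely generated $\Z[\Gamma]$ need not be Noetherian and finite generation of $\widehat{X}$ does not entail the algebraic descending chain condition; the representative difficulty is $\Gamma=\Q$ with $\widehat{X}=\Z[\Q]/(t-2)$, a cyclic torsion module whose dual is an infinite-dimensional, mixing, compact connected abelian group of positive finite entropy that fails the a.d.c.c. For such systems I would argue directly by entropy rigidity. Suppose $\tau\colon X\to X$ is injective, continuous and $\Gamma$-equivariant but not surjective. Then $\tau(X)$ is a proper closed $\Gamma$-invariant subset; since the Haar measure $\lambda_X$ has full support, $\lambda_X(\tau(X))<1$, whereas the pushforward $\mu\coloneqq\tau_*\lambda_X$ is concentrated on $\tau(X)$, so $\mu(\tau(X))=1$ and hence $\mu\neq\lambda_X$. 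On the other hand $\tau$ is a homeomorphism onto $\tau(X)$ and therefore a measure-theoretic conjugacy from $(X,\lambda_X,\Gamma)$ onto $(X,\mu,\Gamma)$; the two systems thus have equal Kolmogorov--Sinai entropy, and since for algebraic actions of amenable groups the Haar measure attains the (finite, by the torsion hypothesis) topological entropy, we get $h_{\mu}(X,\Gamma)=h_{\lambda_X}(X,\Gamma)=h_{\mathrm{top}}(X,\Gamma)$. Thus $\mu$ is a second measure of maximal entropy.

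The step I expect to be genuinely hard, and on which the whole abelian case rests, is to rule this out by proving that $\lambda_X$ is the \emph{unique} measure of maximal entropy for $(X,\Gamma)$. This intrinsic-ergodicity statement should be extracted from the interplay of the three hypotheses: the torsion of $\widehat{X}$ together with the $\ell^2$-zero-divisor conjecture for $\Gamma$ forces the homoclinic group of $(X,\Gamma)$ to be dense, and combined with mixing this should yield a specification-type property strong enough to single out a unique measure of maximal entropy — the same rigidity that, implicitly, underlies Theorem~\ref{t:th4}. Once uniqueness is established, the existence of a second maximal-entropy measure $\mu\neq\lambda_X$ is a contradiction, so $\tau$ must be surjective, completing the abelian case and hence the proof.
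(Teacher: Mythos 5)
Your treatment of the polycyclic-by-finite case is correct and is exactly the paper's argument: Hall's theorem makes $\Z[\Gamma]$ Noetherian, so the finitely generated torsion module $\widehat{X}$ is Noetherian and Theorem~\ref{t:th4} applies. You also correctly diagnose why this reduction breaks down for a general (non-finitely-generated) abelian $\Gamma$.

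The abelian case, however, contains a genuine gap, and you have located it yourself: the entire argument rests on the claim that $\lambda_X$ is the \emph{unique} measure of maximal entropy for $(X,\Gamma)$, which you do not prove and which is not available in this generality. The sketch you offer for it is not sound: the implication ``torsion dual $+$ $\ell^2$-zero-divisor conjecture $\Rightarrow$ dense homoclinic group $\Rightarrow$ specification $\Rightarrow$ intrinsic ergodicity'' is nowhere established, and density of the homoclinic group is governed by expansiveness-type conditions rather than by torsion of $\widehat X$ alone (cf.~\cite{chung-li}); mixing connected systems with torsion dual and non-dense homoclinic group exist already over $\Z^2$. Moreover, your comparison of maximal-entropy measures needs $h_{\mathrm{top}}(X,\Gamma)<\infty$, and for a general countable abelian $\Gamma$ (e.g.\ $\Gamma=\Q$) this is not justified by anything you cite: the paper only records that torsion of the dual is \emph{necessary} for finite entropy outside the case $\Gamma=\Z^d$, not that it is sufficient. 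The irony is that you assembled precisely the hypotheses needed for the paper's short argument and then did not use them. Since an abelian group is elementary amenable and $\Gamma$ is torsion-free, Linnell's theorem \cite{linnell-zdc-pacific} gives the $\ell^2$-zero-divisor conjecture, so Theorem~\ref{t:rigidity-l2} (with $Y=X$) applies and every $\Gamma$-equivariant continuous $\tau\colon X\to X$ is affine, say $\tau(x)=a(x)+b$ with $a$ a continuous endomorphism, injective whenever $\tau$ is. Dualizing, $\widehat a$ is a \emph{surjective} endomorphism of the finitely generated module $\widehat X$ over the \emph{commutative} ring $\Z[\Gamma]$, hence injective by Vasconcelos's theorem \cite{vasconcelos}; therefore $a=\widehat{\widehat a}$ is surjective and so is $\tau$. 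No chain condition, no entropy, and no intrinsic ergodicity are needed.
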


The paper is organized as follows.
Section~\ref{sec:background} introduces notation and collects some background material and preliminaries.
In Section~\ref{sec:l2-zdc}, we establish a topological rigidity  result for algebraic actions of groups 
$\Gamma$ that satisfy the $\ell^2$-zero-divisor conjecture. 
This rigidity result, which is of independent interest,  is contained in 
\cite[Theorem~1.1]{bhattacharya-ward} for $\Gamma = \Z^d$ and is used in the proof of Theorems~\ref{t:th4} and~\ref{t:poly-abel} above.
Section~\ref{sec:proofs} is devoted to  the proof of all the theorems stated in the Introduction.
The final section discusses some  examples and open questions.

We thank Hanfeng Li and Klaus Schmidt as well as the anonymous referee for valuable comments and remarks.

\section{Background material and preliminary results}
\label{sec:background}

\subsection{Notation}
We write
$\C$ for the complex numbers, $\R$ for the reals,  $\T \coloneqq \{z \in \C : |z| = 1\}$ for the unit circle group and $\N$ for the set of non-negative integers.
\par
Depending on the context, multiplicative or additive notation is used for groups.
We prefer  additive notation for abelian groups except for the circle group $\T$ for which we shall adopt  multiplicative notation.
\par
All group actions considered in this paper are on the left.
If $\Gamma$ is a group acting on sets $X$ and $Y$, one says that a map $f \colon X \to Y$ is
$\Gamma$-\emph{equivariant} if $f(\gamma x) = \gamma f(x)$ for all $\gamma \in \Gamma$ and 
$x \in X$. 

\subsection{Affine maps}
Let $X,Y$ be topological groups and let $f \colon X \to Y$ be a map.
One says that $f$ is an \emph{affine map} if there is a continuous group morphism $a \colon X \to Y$ and an element $b \in Y$ such that
\begin{equation}
\label{e:affine}
f(x) = a(x) \cdot b \quad \text{for all } x \in X. 
\end{equation}
This amounts to saying that there exists a continuous group morphism 
$c \colon X \to Y$ and an element $d \in Y$ such that
$f(x) = d \cdot c(x)$ for all $x \in X$ 
(write $a(x) \cdot b = b \cdot (b^{-1} a(x) b)$ and $d \cdot c(x) = (d c(x) d^{-1})\cdot  d$).
\par

\begin{proposition}
\label{p:adcc-aff-surj}
Let $X$ be a  compact metrizable  group equipped with an action of a countable group $\Gamma$ by continuous group automorphisms. 
Suppose that $(X,\Gamma)$ satisfies the algebraic descending chain condition.
Then every injective $\Gamma$-equivariant affine map $f \colon X \to X$ is surjective.
\end{proposition}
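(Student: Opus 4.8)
The plan is to reduce the problem to the \emph{linear part} of $f$ and then feed a descending chain of subgroups built from it into the algebraic descending chain condition. First I would write $f$ in the form $f(x) = a(x) \cdot b$, where $a \colon X \to X$ is a continuous group morphism and $b \in X$. Evaluating the equivariance relation $f(\gamma x) = \gamma f(x)$ at $x = 1_X$, and using that each $\gamma$ acts by a group automorphism (so that $\gamma 1_X = 1_X$ and $a(1_X) = 1_X$), I obtain $b = \gamma b$ for all $\gamma \in \Gamma$; that is, $b$ is a $\Gamma$-fixed point. Cancelling $b$ in the equivariance relation for general $x$ then yields $a(\gamma x) = \gamma a(x)$, so $a$ is itself $\Gamma$-equivariant. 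Since left translation by $b$ is a homeomorphism of $X$, the map $f$ is injective (respectively surjective) if and only if $a$ is. Thus it suffices to show that the injective $\Gamma$-equivariant continuous morphism $a$ is surjective.

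Next I would consider the descending sequence $X = a^0(X) \supseteq a(X) \supseteq a^2(X) \supseteq \cdots$. Each $a^n(X)$ is a closed subgroup of $X$, being the image of the compact group $X$ under the continuous morphism $a^n$, and it is $\Gamma$-invariant because $a^n$ is $\Gamma$-equivariant, so that $\gamma \cdot a^n(X) = a^n(\gamma X) = a^n(X)$. The algebraic descending chain condition therefore provides an integer $k \geq 0$ with $a^k(X) = a^{k+1}(X)$.

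Finally I would exploit injectivity. Set $g \coloneqq a^k$, which is injective as a composition of injective maps, and put $Y \coloneqq a(X) \subseteq X$. Applying $g$ gives $g(Y) = a^{k+1}(X) = a^k(X) = g(X)$, and since $g$ is injective this forces $Y = X$, i.e.\ $a(X) = X$. Hence $a$, and therefore $f$, is surjective.

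The essential point, and the step I expect to require the most care, is the reduction that makes the hypothesis usable: the iterates $f^n(X)$ are only cosets of the subgroups $a^n(X)$ rather than subgroups themselves, so one cannot apply the chain condition to them directly, and passing to the morphism $a$ (whose images are genuine $\Gamma$-invariant closed subgroups) is exactly what converts topological surjectivity into an algebraic statement. The concluding observation that an injective $g$ with $g(a(X)) = g(X)$ forces $a(X) = X$ is elementary, but it is precisely where the injectivity hypothesis on $f$ is consumed.
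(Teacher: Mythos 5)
Your proposal is correct and follows essentially the same route as the paper's proof: reduce to the linear part $a$, apply the algebraic descending chain condition to the chain of closed $\Gamma$-invariant subgroups $a^n(X)$, and use the injectivity of $a^k$ to conclude $a(X)=X$. The extra details you supply (that $b$ is $\Gamma$-fixed and that $a$ inherits equivariance) are exactly the observations the paper records in passing.
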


\begin{proof}
The proof is analogue to the one given in the Introduction for the surjunctivity of dynamical systems satisfying the descending chain condition.
More specifically, let $f \colon X \to X$ be an  affine map.
Let $a \colon X \to X$  be a  continuous group endomorphism and $b \in X$ 
as in~\eqref{e:affine}.
Observe that $f$ is injective (resp.~surjective) if and only if
$a$ is injective (resp.~surjective), and that $f$ is $\Gamma$-equivariant if and only if
$a$ is $\Gamma$-equivariant and $b$ is fixed by $\Gamma$.
Suppose that $f$ is $\Gamma$-equivariant.
  By applying the a.d.c.c.\ to the sequence
  of closed $\Gamma$-invariant subgroups
  $$
  X= a^0(X) \supset a(X) = a^1(X) \supset a^2(X) \supset \dots,
  $$
  we see that there is an integer $k \geq 0$ such that
  $a^k(X) = a^{k + 1}(X)$.
  Now if $f$ is injective, then $a$ and hence $a^k$ are also injective,
so that we get $X = a(X)$.
This implies that $a$ and hence $f$ are surjective.
\end{proof}

An algebraic dynamical system $(X,\Gamma)$ is said to be \emph{topologically rigid} if every 
$\Gamma$-equivariant continuous  map 
$\tau \colon X \to X$ is affine.
As an immediate consequence of Proposition~\ref{p:adcc-aff-surj}, we get the following.

\begin{corollary}
\label{c:rigid-adcc-are-surj}
Let $X$ be a  compact metrizable  group equipped with an action of a countable group $\Gamma$ by
continuous group automorphisms. 
Suppose that $(X,\Gamma)$ is topologically rigid and satisfies the algebraic descending chain condition.
Then $(X,\Gamma)$ is surjunctive.
\qed
\end{corollary}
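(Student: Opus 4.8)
The plan is to derive the statement directly from Proposition~\ref{p:adcc-aff-surj} by feeding it the affineness supplied by topological rigidity; no genuinely new argument should be required. First I would fix an arbitrary injective $\Gamma$-equivariant continuous map $\tau \colon X \to X$, with the goal of showing that $\tau$ is surjective. Since surjunctivity is exactly the assertion that every such $\tau$ is surjective, establishing this for an arbitrary $\tau$ completes the proof.

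The key step is to invoke the hypothesis of topological rigidity. By definition, $(X,\Gamma)$ being topologically rigid means that every $\Gamma$-equivariant continuous self-map of $X$ is affine. In particular $\tau$, being both continuous and $\Gamma$-equivariant, is affine. This reduces the problem to the affine case already treated.

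Having reduced to an injective $\Gamma$-equivariant affine map, I would then apply Proposition~\ref{p:adcc-aff-surj}. Since $(X,\Gamma)$ is assumed to satisfy the algebraic descending chain condition, that proposition asserts precisely that every injective $\Gamma$-equivariant affine map $f \colon X \to X$ is surjective; applying it to $f = \tau$ yields that $\tau$ is surjective, as desired.

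I do not expect any obstacle here, since the argument is simply the concatenation of the definition of topological rigidity with the previously established Proposition~\ref{p:adcc-aff-surj}; the only thing to check is that the hypotheses of that proposition are met for $\tau$, which is immediate. This is why the statement of Corollary~\ref{c:rigid-adcc-are-surj} is marked with \textup{\qed}: it is an immediate consequence of the proposition together with the definition of topological rigidity.
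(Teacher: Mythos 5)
Your proposal is correct and follows exactly the paper's (implicit) argument: the corollary is stated with a \qed precisely because it is the immediate concatenation of the definition of topological rigidity with Proposition~\ref{p:adcc-aff-surj}, which is what you do.
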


\subsection{Pontryagin duality}
\label{subsec:pontryagin}
We use the monograph of Morris~\cite{morris} as a general reference for Pontryagin duality.
\par
By an  \emph{lca group}, we mean an abelian topological group that is locally compact and Hausdorff.
Let $X$ be an lca group.
A continuous group morphism $\chi \colon X \to \T$ is called a \emph{character} of $X$.
The set $\widehat{X}$ of all characters of $X$,
equipped with pointwise multiplication and the topology of uniform convergence on compact sets,
is  an lca group \cite[Theorem~10]{morris}, called the \emph{character group} or  \emph{Pontryagin dual}  of $X$.
The celebrated Pontryagin - van Kampen duality theorem (cf.\ \cite[Theorem~23]{morris})
asserts that
the evaluation map $\iota \colon X \to \widehat{\widehat{X}}$,
defined by $\iota(x)(\chi) \coloneqq \chi(x)$ for all $x \in X$ and $\chi \in \widehat{X}$,
yields an  isomorphism of topological groups between $X$ and its bidual 
$\widehat{\widehat{ X}}$. 
This  can be used to identify any lca group with its bidual. 
The space $X$ is
compact (resp.~discrete, resp.~metrizable, resp.~$\sigma$-compact)
if and only if $\widehat{X}$ is
discrete (resp.~compact, resp.~$\sigma$-compact, resp.~metrizable)
(see Theorems 12 and 19 in \cite{morris}).
In particular, $X$ is compact and metrizable if and only if $\widehat{X}$ is discrete and countable.
When $X$ is compact, $X$ is connected if and only if $\widehat{X}$ is a \emph{torsion-free} group  (i.e., has no non-trivial elements of finite order) \cite[Corollary 4 in Chapter 7]{morris}.
When $X$ is compact and connected, 
the topological dimension of $X$ is equal to the rank of
$\widehat{X}$
(cf.\ \cite[Theorem 34]{morris}).
Here the \emph{rank} of an abelian group is defined as being the maximal cardinality of a linearly independent subset.  
\par
Let $X$ be an lca group and $Y \subset X$ a closed subgroup.
Then the set 
$$
\ann(Y) \coloneqq \{\chi \in \widehat{X}: \chi(y) =1 \text{  for all  } y \in Y\}
$$
is a closed subgroup of $\widehat{X}$,
called the \emph{annihilator} of $Y$.
After identifying $X$ with its bidual, one has
$\ann(\ann(Y)) = Y$
\cite[Proposition~38]{morris}.
Moreover, as topological groups,
$\widehat{Y}$ is canonically isomorphic to  $\widehat{X}/\ann(Y)$ and
$\ann(Y)$ is canonically isomorphic to  $\widehat{X/Y}$
(cf.\ \cite[Theorem~27]{morris}).
\par
Let $X, Y$ be lca groups and $f \colon X \to Y$ a continuous group morphism.
The map $\widehat{f} \colon \widehat{Y} \to \widehat{X}$, defined by 
$\widehat{f}(\chi) \coloneqq  \chi \circ f$ for all $\chi \in \widehat{Y}$ is a continuous group morphism, called the \emph{dual} of $f$.
After identifying $X$ and $Y$ with their biduals,
we have that $\widehat{\widehat{f}} =f $.
If $f$ is surjective, then $\widehat{f}$ is injective.
On the other hand, if $f$ is both injective and open, then $\widehat{f}$ is surjective.
As a consequence, if $X$ and $Y$ are either both compact or both discrete, then  $f$ is injective (resp.~surjective) if and only if $\widehat{f}$ is surjective (resp.~injective)  \cite[Proposition~30]{morris}.

\begin{proposition}
\label{p:char-group-mor}
Let $X,Y$ be lca groups and $f \colon X \to Y$ a continuous map.
Then $f$ is a group morphism if and only if 
$\xi \circ f \in \widehat{X}$ for all $\xi \in \widehat{Y}$.
\end{proposition}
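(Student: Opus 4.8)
The statement is an ``if and only if'', so I would prove the two implications separately, and the forward direction is immediate. If $f$ is a continuous group morphism, then for every $\xi \in \widehat{Y}$ the composite $\xi \circ f \colon X \to \T$ is a composition of continuous group morphisms, hence itself a continuous group morphism, i.e.\ a character of $X$; thus $\xi \circ f \in \widehat{X}$.

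For the converse I would exploit the single nontrivial ingredient, namely that the characters of an lca group separate its points. This is a direct consequence of the Pontryagin--van Kampen duality theorem recalled above: since the evaluation map $\iota \colon Y \to \widehat{\widehat{Y}}$ is an isomorphism, it is in particular injective, which means precisely that for distinct $y, y' \in Y$ there is some $\xi \in \widehat{Y}$ with $\xi(y) \neq \xi(y')$. Granting this, suppose $\xi \circ f \in \widehat{X}$ for every $\xi \in \widehat{Y}$ and fix $x, x' \in X$. For each such $\xi$, using first that $\xi \circ f$ is a character of $X$ and then that $\xi$ is a character of $Y$, I would compute
$$
\xi\bigl(f(x + x')\bigr) = (\xi \circ f)(x + x') = (\xi \circ f)(x)\,(\xi \circ f)(x') = \xi\bigl(f(x)\bigr)\,\xi\bigl(f(x')\bigr) = \xi\bigl(f(x) + f(x')\bigr).
$$
Since this holds for all $\xi \in \widehat{Y}$ and characters separate the points of $Y$, I conclude $f(x + x') = f(x) + f(x')$. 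As $x, x'$ were arbitrary, $f$ is a group morphism, and it is continuous by hypothesis.

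There is no real obstacle here beyond correctly invoking point-separation: the whole argument is a two-line character computation once one knows that $\widehat{Y}$ separates the points of $Y$. The only places to be slightly careful are to record that $Y$ is abelian (as every lca group is, by the convention fixed above), so that the identity $\xi\bigl(f(x)\bigr)\,\xi\bigl(f(x')\bigr) = \xi\bigl(f(x) + f(x')\bigr)$ is legitimate, and to note that identifying $Y$ with its bidual yields the injectivity of $\iota$ and hence the separation property for free.
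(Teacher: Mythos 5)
Your proof is correct and follows essentially the same route as the paper's: the forward implication is the observation that composites of continuous group morphisms are continuous group morphisms, and the converse is exactly the character computation $\xi(f(x+x')) = \xi(f(x)+f(x'))$ combined with the fact that $\widehat{Y}$ separates the points of $Y$ via Pontryagin--van Kampen duality. Nothing to add.
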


\begin{proof}
The necessity is obvious since the composite of two continuous group morphisms is also a continuous group morphism.
Conversely, suppose that $\xi \circ f \in \widehat{X}$ for all $\xi \in \widehat{Y}$.
Then, we have that
\begin{equation*}
\xi(f(x_1 + x_2)) = \xi \circ f (x_1 + x_2) = \xi \circ f (x_1) \cdot  \xi \circ f(x_2) = \xi(f(x_1) + f(x_2))
\end{equation*}
for all $x_1,x_2 \in X$ and $\xi \in \widehat{Y}$.
As it follows  from the Pontryagin - van Kampen duality theorem that characters separate points in $Y$, we deduce that $f(x_1 + x_2) = f(x_1) + f(x_2)$.
This shows that $f$ is a group morphism.
\end{proof}

\begin{corollary}
\label{c:char-affine}
Let $X,Y$ be lca groups and $f \colon X \to Y$ a continuous map.
Then $f$ is affine if and only if
$\xi \circ f$ is affine for every $\xi \in \widehat{Y}$.
\end{corollary}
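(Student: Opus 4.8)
The plan is to deduce this from Proposition~\ref{p:char-group-mor} by reducing the affineness of $f$ to the homomorphism property of a suitable translate of $f$. Throughout I use additive notation for the lca groups $X$ and $Y$, so that ``affine'' means $f(x) = a(x) + b$ for some continuous group morphism $a \colon X \to Y$ and some $b \in Y$, while $\T$ retains its multiplicative notation.

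For the necessity I would argue directly. If $f(x) = a(x) + b$ with $a$ a continuous morphism and $b \in Y$, then for every $\xi \in \widehat{Y}$ the fact that $\xi$ is a homomorphism into $\T$ gives $\xi \circ f(x) = \xi(a(x)) \cdot \xi(b)$. Here $\xi \circ a$ is a character of $X$ and $\xi(b) \in \T$ is a constant, so $\xi \circ f$ is affine as a map $X \to \T$. This settles one implication.

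For the converse I would set $b \coloneqq f(0)$ and introduce the translate $g \colon X \to Y$ defined by $g(x) \coloneqq f(x) - b$, which is continuous and satisfies $g(0) = 0$. The goal is to show that $g$ is a group morphism, since then $f = g + b$ will be affine. By Proposition~\ref{p:char-group-mor} it suffices to verify that $\xi \circ g \in \widehat{X}$ for every $\xi \in \widehat{Y}$. For this I would note that $\xi \circ g(x) = (\xi \circ f)(x) \cdot \xi(b)^{-1}$; since $\xi \circ f$ is affine into $\T$ by hypothesis and $\xi(b)^{-1}$ is a constant, $\xi \circ g$ is again affine as a map $X \to \T$.

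The one step that requires a moment's care---and which I regard as the crux rather than a genuine obstacle---is the observation that an affine map $X \to \T$ taking the value $1$ at the origin is automatically a character. Writing $\xi \circ g = \psi \cdot d$ with $\psi \in \widehat{X}$ and $d \in \T$ a constant, and evaluating at $0$, one gets $1 = \xi(g(0)) = \psi(0) \cdot d = d$, hence $\xi \circ g = \psi \in \widehat{X}$. Proposition~\ref{p:char-group-mor} then shows that $g$ is a group morphism, which finishes the argument. The whole proof is thus a normalization that transfers the pointwise affineness of the compositions $\xi \circ f$ to the characterization of homomorphisms already obtained, so I expect no real difficulty beyond keeping track of the constants.
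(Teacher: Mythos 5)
Your proposal is correct and follows essentially the same route as the paper: the published proof is exactly the observation that $f$ is affine if and only if $f - f(0_X)$ is a group morphism, combined with Proposition~\ref{p:char-group-mor}, and you have simply written out the details (including the small verification that an affine map into $\T$ fixing the identity is a character). No gaps; your normalization of the constants is precisely the step the paper leaves implicit.
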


\begin{proof}
This immediately follows from Proposition~\ref{p:char-group-mor} after observing that $f$ is affine if and only if $f - f(0_X)$ is a group morphism.
\end{proof}

\subsection{Fourier transform}
Let $X$ be a compact metrizable abelian group with Haar measure $\mu$ normalized by 
$\mu(X) = 1$.
Recall that its Pontryagin dual $\widehat{X}$ is a discrete countable abelian group.
Let $L^2(X) \coloneqq L^2(X,\mu)$ denote the Hilbert space of (equivalence classes of) complex-valued functions on $X$ that are square-integrable with respect to $\mu$.
The inner product on $L^2(X)$ is given by
$$
\langle h_1 , h_2 \rangle = \int_X h_1(x) \overline{h_2(x)} \, d\mu(x) 
$$
for all $h_1, h_2 \in L^2(X)$.
Note that there is a natural inclusion  $\widehat{X} \subset L^2(X)$ since $\T \subset \C$. 
Actually $\widehat{X}$ is an Hilbert basis of $L^2(X)$
(this is a particular case of the Peter-Weyl theorem, see e.g. \cite[Sections~21-22]{weil} and 
\cite[Chapter~XI, Section~26]{godement}).
\par
Let  $\ell^2(\widehat{X})$ denote the Hilbert space of square-summable complex-valued functions on 
$\widehat{X}$.
Thus $\ell^2(\widehat{X}) = L^2(\widehat{X},\nu)$, where $\nu$ is the counting measure on $\widehat{X}$.
A Hilbert basis for $\ell^2(\widehat{X})$ is given by the family $(e_\chi)_{\chi \in \widehat{X}}$, where 
$e_\chi \in \ell^2(\widehat{X})$ is the map that takes the value $1$ at $\chi$ and $0$ everywhere else. 
\par
Since $\widehat{X}$ is a Hilbert basis of $L^2(X)$,
there is a unique unitary isomorphism
$\FF_X \colon L^2(X)  \to \ell^2(\widehat{X})$, called the \emph{Fourier transform},
that satisfies 
 $\FF_X(\chi) = e_\chi$ for all $\chi \in \widehat{X}$.
Given $h \in L^2(X)$, one has
\[
(\FF_X(h))(\chi) \coloneqq \langle h, \chi \rangle 
\]
for all $\chi \in \widehat{X}$ and $\FF_X(h)$ is called the \emph{Fourier transform} of $h$.
\par
 Suppose now that $X$ is equipped with an action of a countable group $\Gamma$ by continuous group automorphisms.
 Then there is a   unitary action of $\Gamma$ on $L^2(X)$ given by
 $$
 (\gamma h)(x) \coloneqq h(\gamma^{-1} x)
 $$
 for all $\gamma \in \Gamma$,  $h \in L^2(X)$, and $x \in X$. Note that $\widehat{X}$ is a $\Gamma$-invariant subset of $L^2(X)$.
 The action of $\Gamma$ on $\widehat{X}$ also induces a unitary action of $\Gamma$ on 
 $\ell^2(\widehat{X})$ given by
 $$
 (\gamma k)(\chi) \coloneqq  k(\gamma^{-1} \chi)
 $$
 for all $\gamma \in \Gamma$,  $k \in \ell^2(\widehat{X})$, and $\chi \in \widehat{X}$.
Observe that
 $\gamma e_\chi = e_{\gamma \chi}$ for all $\gamma \in \Gamma$ and $\chi \in \widehat{X}$.
 This shows in particular that the Fourier transform $\FF_X \colon L^2(X)  \to \ell^2(\widehat{X})$
 is $\Gamma$-equivariant.
  Finally note that these various actions of $\Gamma$ induce a $\Z[\Gamma]$-module structure on $X$ and 
  $\widehat{X}$ and a $\C[\Gamma]$-module structure on $L^2(X)$ and $\ell^2(\widehat{X})$.
The Fourier transform $\FF_X$ is a $\C[\Gamma]$-module isomorphism from $L^2(X)$ 
onto $\ell^2(\widehat{X})$.

\subsection{The van Kampen lifting theorem}
Given topological abelian groups $X$ and $K$,
we denote by $C(X,K)$ the set of all continuous maps $h \colon X \to K$ and by $C_0(X,K)$ the subset of $C(X,K)$ consisting of all $h \in C(X,K)$ such that $h(0_X) = 0_K$.
We shall need the following result, due to van Kampen~\cite{van-kampen} (see also~\cite{walters-compact-groups}, \cite{bhattacharya-ward}).    

\begin{theorem}[van Kampen's lifting theorem]
\label{t:van-kampen-lift}
Let $X$ be a compact connected metrizable abelian group and $q\in C(X, \T)$.
Then there exist unique elements $t_q \in \T$, $\chi_q \in {\widehat X}$, and
$h_q\in C_0(X,\R)$ such that
\begin{equation}
\label{e:vanKampen}
q(x) = t_q \chi_q(x) e^{2\pi i h_q(x)},
\end{equation}
for all $x\in X$. 
\qed
\end{theorem}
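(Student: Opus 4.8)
The plan is to reinterpret the asserted decomposition as the splitting of a short exact sequence attached to the exponential sequence $0 \to \Z \to \R \to \T \to 0$, and to separate the statement into a uniqueness part (elementary, using only connectedness and compactness of $X$) and an existence part (where the real work lies, via sheaf cohomology and the inverse-limit structure of $X$). Throughout I use that, by Pontryagin duality, $\widehat X$ is a countable discrete torsion-free abelian group, and that $X$ is the projective limit of a sequence of finite-dimensional tori: writing $\widehat X = \bigcup_i A_i$ as an increasing union of finitely generated (hence free) subgroups $A_i \cong \Z^{n_i}$ and dualizing the inclusions yields $X = \varprojlim \widehat{A_i} = \varprojlim \T^{n_i}$, with surjective bonding homomorphisms.

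For uniqueness, suppose $t_1 \chi_1 e^{2\pi i h_1} = t_2 \chi_2 e^{2\pi i h_2}$ with $t_j \in \T$, $\chi_j \in \widehat X$, $h_j \in C_0(X,\R)$. Evaluating at $0_X$ and using $\chi_j(0_X)=1$, $h_j(0_X)=0$ gives $t_1 = t_2$, so it remains to show that a character $\chi := \chi_1 \chi_2^{-1}$ of the form $\chi = e^{2\pi i h}$ with $h := h_2 - h_1 \in C_0(X,\R)$ must be trivial. Consider $\phi(x,y) := h(x+y) - h(x) - h(y)$. Since $\chi$ is a homomorphism, $e^{2\pi i \phi(x,y)} = 1$, so $\phi$ is a continuous $\Z$-valued function on the connected space $X \times X$ vanishing at $(0_X,0_X)$, hence $\phi \equiv 0$. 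Thus $h$ is a continuous homomorphism $X \to \R$; as $X$ is compact, $h(X)$ is a compact subgroup of $\R$, so $h \equiv 0$ and $\chi = 1$, forcing $\chi_1 = \chi_2$ and $h_1 = h_2$.

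For existence, I would work with the \emph{soft} sheaf $\CC_\R$ of germs of continuous real-valued functions on the (compact metrizable, hence paracompact) space $X$. The exponential sequence of sheaves
$$0 \to \underline{\Z} \to \CC_\R \xrightarrow{\exp} \CC_\T \to 0$$
is exact because $\R \to \T$ admits local continuous sections. Since $\CC_\R$ is soft we have $H^i(X,\CC_\R)=0$ for $i \geq 1$, and since $X$ is connected $H^0(X,\underline{\Z})=\Z$; the long exact cohomology sequence therefore collapses to
$$0 \to \Z \to C(X,\R) \xrightarrow{\exp} C(X,\T) \xrightarrow{\delta} \check{H}^1(X;\Z) \to 0,$$
with $\ker\delta = \exp(C(X,\R)) = \{e^{2\pi i h} : h \in C(X,\R)\}$. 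Next I would identify $\check{H}^1(X;\Z)$ with $\widehat X$: by continuity of \v{C}ech cohomology on projective limits of compact spaces, $\check{H}^1(X;\Z) = \varinjlim \check{H}^1(\T^{n_i};\Z) = \varinjlim \Z^{n_i} = \varinjlim A_i = \widehat X$, and a direct computation on each torus shows that $\delta$ carries a character $\chi \in \widehat X \subset C(X,\T)$ to its own class under this identification. Hence $\delta$ restricts to the identity on $\widehat X$, splitting the sequence as $C(X,\T) = \widehat X \oplus \ker\delta$. Given $q$, set $\chi_q := \delta(q) \in \widehat X$, so $q \chi_q^{-1} \in \ker\delta$, i.e. $q\chi_q^{-1} = e^{2\pi i g}$ for some $g \in C(X,\R)$. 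Writing $g = g(0_X) + h_q$ with $h_q \in C_0(X,\R)$ and $t_q := e^{2\pi i g(0_X)} = q(0_X) \in \T$ yields the desired identity $q = t_q \chi_q e^{2\pi i h_q}$.

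The main obstacle is the middle step of the existence argument: establishing $\check{H}^1(X;\Z) \cong \widehat X$ in a way compatible with the connecting map $\delta$, so that characters genuinely split the sequence. This requires the continuity of \v{C}ech (equivalently sheaf) cohomology under the inverse limit $X = \varprojlim \T^{n_i}$ — valid since the $\T^{n_i}$ are compact and the bonding maps are surjective homomorphisms — together with the base case on a single torus, where for $\chi_m(x) = e^{2\pi i \langle m, x\rangle}$ one computes $\delta(\chi_m) = m \in \Z^{n} = \check{H}^1(\T^{n};\Z) = \widehat{\T^{n}}$. Once these two ingredients are in place, the splitting assembles with the uniqueness already proved into the full statement.
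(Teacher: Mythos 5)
Your proof is correct, but there is nothing in the paper to compare it against: Theorem~\ref{t:van-kampen-lift} is stated with a \qed{} and imported from the literature (van Kampen \cite{van-kampen}; see also \cite{walters-compact-groups}, \cite{bhattacharya-ward}), so the relevant comparison is with the classical proofs. Your uniqueness argument --- reduce to showing that a character of the form $e^{2\pi i h}$ with $h \in C_0(X,\R)$ is trivial, note that $h(x+y)-h(x)-h(y)$ is continuous, integer-valued and vanishes at $(0_X,0_X)$, hence $h$ is a homomorphism into $\R$ with compact image --- is the standard one and is complete. Your existence argument replaces van Kampen's original almost-periodic analysis and Walters's elementary lifting argument by the Bruschlinsky-type identification $C(X,\T)/\exp\bigl(2\pi i\, C(X,\R)\bigr) \cong \check{H}^1(X;\Z)$ coming from the exponential sheaf sequence, combined with continuity of \v{C}ech cohomology along $X=\varprojlim \T^{n_i}$ and the torus computation $\delta(\chi_m)=m$; the resulting isomorphism $\check{H}^1(X;\Z)\cong\widehat{X}$ for compact connected abelian groups is itself classical (Steenrod; Hofmann--Morris, \emph{The Structure of Compact Groups}), so you could shorten by quoting it, but your derivation is sound: the bonding maps are surjective because $\T$ is divisible, the sheaf of continuous real functions is soft since $X$ is paracompact, and naturality of $\delta$ under the projections $X\to\T^{n_i}$ is exactly what makes $\delta$ restrict to an isomorphism on $\widehat{X}$ and split the sequence. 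In a final write-up you should make explicit that \v{C}ech and sheaf cohomology agree on paracompact spaces (needed to invoke continuity) and that injectivity of $\delta|_{\widehat{X}}$ follows already from your uniqueness step. What the sheaf route buys is conceptual transparency and a clean splitting $C(X,\T)=\widehat{X}\oplus\ker\delta$; what the classical route buys is independence from cohomological machinery.
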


Note that $C(X,\T)$ (resp.~$C_0(X,\R)$) is an abelian group for pointwise multiplication (resp.~pointwise addition).
When $X$ is endowed with an action of a countable group $\Gamma$ by continuous group automorphisms,  $\Gamma$ acts by group automorphisms on $C(X,\T)$ (resp.~$C_0(X,\R)$) by 
$(\gamma f)(x) \coloneqq f(\gamma^{-1} x)$  for all $x \in X$ and
$f \in C(X,\T)$ (resp.~$f \in C_0(X,\R)$).
This gives a $\Z[\Gamma]$-module structure on $C(X,\T)$ and $C_0(X,\R)$.
We shall use the following fact.

\begin{lemma}
\label{l:h-equivariante}
Let $X$ be a compact connected metrizable abelian group
equipped with an action of a countable group $\Gamma$ by continuous group automorphisms.
Then the map from $C(X,\T)$ into $C_0(X,\R)$
given by $q\mapsto h_q$, where $h_q$ is defined by~\eqref{e:vanKampen}, 
is a $\Z[\Gamma]$-module morphism.
\end{lemma}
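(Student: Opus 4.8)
The plan is to verify the two properties that together make $q \mapsto h_q$ a $\Z[\Gamma]$-module morphism between the $\Z[\Gamma]$-modules $C(X,\T)$ and $C_0(X,\R)$: additivity (that the map converts the multiplicative group law of $C(X,\T)$ into the additive law of $C_0(X,\R)$) and $\Gamma$-equivariance. In both steps the engine will be the \emph{uniqueness} clause of van Kampen's lifting theorem (Theorem~\ref{t:van-kampen-lift}): once I exhibit, for the relevant element of $C(X,\T)$, a factorization of the shape~\eqref{e:vanKampen}, the three components are forced to coincide with the canonical ones, and in particular the $C_0(X,\R)$-component is pinned down.

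For additivity, given $q_1,q_2\in C(X,\T)$ I would multiply their van Kampen decompositions. Writing $q_i(x)=t_{q_i}\chi_{q_i}(x)e^{2\pi i h_{q_i}(x)}$, the product satisfies
$$(q_1 q_2)(x)=\bigl(t_{q_1}t_{q_2}\bigr)\bigl(\chi_{q_1}\chi_{q_2}\bigr)(x)\,e^{2\pi i\,(h_{q_1}+h_{q_2})(x)}.$$
Here I must check that $t_{q_1}t_{q_2}\in\T$, that the pointwise product $\chi_{q_1}\chi_{q_2}$ is again a character of $X$ (since $\widehat{X}$ is a group under pointwise multiplication), and that $h_{q_1}+h_{q_2}\in C_0(X,\R)$ (continuity and vanishing at $0_X$ are both preserved under sums). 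Uniqueness then yields $h_{q_1 q_2}=h_{q_1}+h_{q_2}$, as required.

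For equivariance, fix $\gamma\in\Gamma$ and $q\in C(X,\T)$ and evaluate the decomposition of $q$ at $\gamma^{-1}x$:
$$(\gamma q)(x)=q(\gamma^{-1}x)=t_q\,\chi_q(\gamma^{-1}x)\,e^{2\pi i\,h_q(\gamma^{-1}x)}.$$
By the definitions of the $\Gamma$-actions recalled earlier, $\chi_q(\gamma^{-1}x)=(\gamma\chi_q)(x)$ and $h_q(\gamma^{-1}x)=(\gamma h_q)(x)$. Again I need the components to land in the correct subgroups: $\gamma\chi_q\in\widehat{X}$ because $\widehat{X}$ is $\Gamma$-invariant, and $\gamma h_q\in C_0(X,\R)$ because $\gamma^{-1}$ is a group automorphism fixing $0_X$, whence $(\gamma h_q)(0_X)=h_q(0_X)=0$. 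Thus $(\gamma q)(x)=t_q\,(\gamma\chi_q)(x)\,e^{2\pi i(\gamma h_q)(x)}$ is a factorization of the required form, and uniqueness gives $h_{\gamma q}=\gamma h_q$.

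Combining the two steps shows that $q\mapsto h_q$ is a $\Gamma$-equivariant additive group morphism, hence a $\Z[\Gamma]$-module morphism. I do not expect a serious obstacle here: the whole argument is driven by the uniqueness part of Theorem~\ref{t:van-kampen-lift}, and the only points requiring genuine (if minor) care are the bookkeeping verifications that the multiplied characters, the summed lifts, and the $\Gamma$-translated lifts stay inside $\widehat{X}$ and $C_0(X,\R)$ respectively — in particular that translation by $\gamma$ preserves the normalization $h(0_X)=0$.
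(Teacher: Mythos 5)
Your proof is correct and follows the same route as the paper's: multiply (respectively, $\gamma$-translate) the van Kampen decompositions and invoke the uniqueness clause of Theorem~\ref{t:van-kampen-lift} to identify the $C_0(X,\R)$-components. The extra bookkeeping you carry out (that products of characters, sums of lifts, and translated lifts stay in $\widehat{X}$ and $C_0(X,\R)$) is exactly what the paper leaves implicit.
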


\begin{proof}
Let $q,q'\in C(X, \T)$ and $\gamma \in \Gamma$. 
 Then~\eqref{e:vanKampen} gives us, for all $x \in X$,
 \begin{equation*}
\label{e:vanKampen2}
q(x) = t_q \chi_q(x) e^{2\pi i h_q(x)}
\quad \text{and} \quad
q'(x) = t_{q'} \chi_{q'}(x) e^{2\pi i h_{q'}(x)}.
\end{equation*}
Therefore
\begin{align*}
(q q')(x) &= q(x) q'(x) = 
t_q \chi_q(x) e^{2\pi i h_q(x)}
t_{q'} \chi_{q'}(x) e^{2\pi i h_{q'}(x)} \\
&= t_q t_{q'} (\chi_{q} \chi_{q'})(x) e^{2\pi i (h_q + h_{q'})(x)}
\end{align*}
and
\[
(\gamma q)(x) = q(\gamma^{-1} x) 
= t_q \chi_q(\gamma^{-1} x) e^{2\pi i h_q(\gamma^{-1} x)} 
= t_q (\gamma \chi_q)(x) e^{2\pi i (\gamma h_q)(x)}.
\]
By applying the uniqueness part of the van Kampen theorem, we deduce that
$h_{q q'} = h_q + h_{q'}$ and $h_{\gamma q} = \gamma h_q$.
This shows that the map $h \mapsto h_q$ is a $\Z[\Gamma]$-module morphism.
\end{proof}

\subsection{The $\ell^2$-zero-divisor conjecture}
Let $\Gamma$ be a countable group.
Denote by $\C[\Gamma]$ the group algebra of $\Gamma$ over $\C$.
We recall that $\C[\Gamma]$ is the vector space of finitely-supported maps  
$f \colon \Gamma \to \C$ with the convolution product defined by
\begin{equation}
\label{e:convol}
(f g)(\gamma) \coloneqq 
\sum_{\substack{\gamma_1, \gamma_2 \in \Gamma:\\ \gamma_1\gamma_2 = \gamma}}  f(\gamma_1) g(\gamma_2)
\end{equation}
for all $f,g \in \C[\Gamma]$ and $\gamma \in \Gamma$.
\par
Let $\ell^2(\Gamma)$ denote the Hilbert space of square-summable maps  
$k \colon \Gamma \to \C$.
We equip $\ell^2(\Gamma)$ with the \emph{regular representation} of $\Gamma$, i.e., the unitary action of $\Gamma$ given by
$$
(\gamma k)(\gamma') = k(\gamma^{-1} \gamma')
$$
for all $k \in \ell^2(\Gamma)$ and $\gamma,\gamma' \in \Gamma$.
This defines a $\C[\Gamma]$-module structure on $\ell^2(\Gamma)$ which extends the $\C[\Gamma]$-module structure on the ring $\C[\Gamma] \subset \ell^2(\Gamma)$ viewed as a left-module over itself.
\par
One says that $\Gamma$ satisfies Kaplansky's   \emph{zero-divisor conjecture} (over~$\C$) if the ring 
$\C[\Gamma]$ has no zero-divisors (cf. \cite[Chapter~13]{passman}).
This  amounts to saying that $\C[\Gamma]$ is torsion-free as a $\C[\Gamma]$-module.
One says that $\Gamma$ satisfies the
\emph{$\ell^2$-zero-divisor conjecture} if $\ell^2(\Gamma)$ is torsion-free as a $\C[\Gamma]$-module \cite[Conjecture~8.1]{linnell-l2-zdc}.
Clearly $\Gamma$ satisfies the zero-divisor conjecture if it satisfies the $\ell^2$-zero-divisor conjecture.
Note that if $\gamma \in \Gamma$ has finite order $n \geq 2$, then $1 - \gamma$ is a 
two-sided zero-divisor in $\C[\Gamma]$ since
$$
(1 - \gamma)(1 + \gamma + \dots + \gamma^{n - 1}) = (1 + \gamma + \dots + \gamma^{n - 1})(1 - \gamma) = 0.
$$
Consequently, every group that satisfies the zero-divisor conjecture must be torsion-free.   
\par

\subsection{Mixing actions}
Let $X$ be a compact metrizable abelian group with normalized Haar measure $\mu$.
\par
If $\Gamma$ is a countable group acting by continuous group automorphisms on $X$, it is known that $(X,\Gamma)$ is mixing if and only if
\begin{equation}
\label{e:mix-f-g}
\lim_{\gamma \to \infty}
\langle \gamma f, g \rangle = 
\langle f, 1 \rangle \cdot \langle 1, g \rangle,
\end{equation}
for all $f,g \in L^2(X)$. Here  $1 \in \widehat{X} \subset L^2(X)$ is the trivial character of $X$.
The following result is an immediate consequence of
Theorem~1.6 in~\cite{schmidt-book}
(see also~\cite[Proposition~4.1]{lind-schmidt-survey-heisenberg}).
We include a proof for the convenience of the reader.

\begin{lemma}
\label{l:inj-orbit-mixing}
Let  $X$ be a compact metrizable abelian group equipped with an action of a countable group 
$\Gamma$ by continuous group automorphisms.
Suppose that $(X,\Gamma)$ is mixing and that the group $\Gamma$ is torsion-free.
Let $\chi$ be a non-trivial character of $X$.
Then the orbital map from $\Gamma$ into $\widehat{X}$ given by $\gamma \mapsto \gamma \chi$  is injective.
In other words, the stabilizer of $\chi$ in $\Gamma$ is trivial.
\end{lemma}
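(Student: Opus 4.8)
The plan is to establish that the stabilizer of $\chi$ in $\Gamma$ is trivial; this is equivalent to the asserted injectivity, since $\gamma_1 \chi = \gamma_2 \chi$ gives $\gamma_1^{-1}\gamma_2 \in \Stab(\chi)$, so triviality of the stabilizer forces $\gamma_1 = \gamma_2$. Accordingly, I would argue by contradiction and assume there is an element $\gamma \in \Gamma$ with $\gamma \neq 1_\Gamma$ and $\gamma \chi = \chi$.

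First I would exploit the torsion-freeness of $\Gamma$. Since $\gamma \neq 1_\Gamma$ has infinite order, the powers $\gamma^n$ ($n \in \N$) are pairwise distinct elements of $\Gamma$. As $\Gamma$ is discrete and the limit ``$\gamma \to \infty$'' is taken in the one-point compactification of $\Gamma$, any sequence assuming pairwise distinct values eventually leaves every finite subset; hence $\gamma^n \to \infty$ as $n \to \infty$. This is precisely the point at which the hypothesis that $\Gamma$ is torsion-free is indispensable: were $\gamma$ of finite order, its powers would be confined to a finite set and the argument below would break down.

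Next I would invoke the mixing criterion \eqref{e:mix-f-g} with $f = g = \chi$. Because $\widehat{X}$ is an orthonormal Hilbert basis of $L^2(X)$ and $\chi \neq 1$, we have $\langle \chi, 1 \rangle = \langle 1, \chi \rangle = 0$, so the right-hand side of \eqref{e:mix-f-g} vanishes and we obtain
\[
\lim_{\gamma' \to \infty} \langle \gamma' \chi, \chi \rangle = 0.
\]
Evaluating this limit along the sequence $\gamma^n \to \infty$ produced in the previous step yields $\lim_{n \to \infty} \langle \gamma^n \chi, \chi \rangle = 0$.

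Finally I would compute the left-hand side by hand. Since $\gamma \chi = \chi$, every power fixes $\chi$ as well, so $\gamma^n \chi = \chi$ for all $n$, whence $\langle \gamma^n \chi, \chi \rangle = \langle \chi, \chi \rangle = 1$, using that $\lvert \chi \rvert \equiv 1$ and $\mu(X) = 1$. This contradicts the limit being $0$, and the contradiction shows that no nontrivial element can fix $\chi$, so $\Stab(\chi)$ is trivial and the orbital map is injective. I do not anticipate any genuine obstacle here; the only delicate point is the verification that the powers of $\gamma$ escape to infinity in $\Gamma$, which rests entirely on the torsion-freeness assumption.
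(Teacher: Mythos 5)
Your proof is correct and follows essentially the same route as the paper: use torsion-freeness to conclude that the powers of a hypothetical nontrivial stabilizing element escape to infinity, then apply the mixing identity with $f = g = \chi$ together with orthonormality of $\widehat{X}$ in $L^2(X)$ to get the contradiction $1 = \langle \gamma^n \chi, \chi \rangle \to 0$. (The paper's text says ``taking $f=\chi$ and $g=1$'' but its displayed limit is the one you computed with $f=g=\chi$, so your reading is the intended one.)
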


\begin{proof}
As $\widehat{X}$ is a Hilbert basis of $L^2(X)$,
we have that
$$
\langle \chi, \chi \rangle = 1 \quad \text{and} \quad \langle \chi , 1 \rangle = 0.
$$
Let $\gamma \in \Gamma$ with $\gamma \not= 1_\Gamma$. 
Since $\Gamma$ is torsion-free, $\gamma$ has infinite order and hence
$ \gamma^n$ tends to infinity as  $|n| \to \infty$.
Thus, taking $f = \chi$ and $g = 1$ in~\eqref{e:mix-f-g}, we get
\[
\lim_{|n| \to \infty}
\langle \gamma^n \chi, \chi \rangle = \langle \chi, 1 \rangle \cdot \langle 1, \chi \rangle = 0.
\]
This implies $\gamma \chi \not= \chi$ since otherwise we would have
$\langle \gamma^n \chi , \chi \rangle = \langle \chi, \chi \rangle = 1$ for all $n$. 
\end{proof}

\begin{proposition}
\label{p:k-*}
Let  $X$ be a compact metrizable abelian group equipped with an action of a countable group 
$\Gamma$ by continuous group automorphisms.
Suppose that the group $\Gamma$ is torsion-free and that the dynamical system $(X,\Gamma)$ is mixing.
Let $\chi$ be a non-trivial character of $X$. 
For $k\in \ell^2({\widehat X})$, define a map $k_\chi \colon \Gamma \to \C$ by setting
\begin{equation}
\label{e:k-sub-*}
k_\chi(\gamma) \coloneqq  k(\gamma \chi)
\end{equation}
for all $\gamma \in \Gamma$.
Then the following hold:
\begin{enumerate}[{\rm (i)}]
\item $k_\chi \in \ell^{2}(\Gamma)$ for all $k \in \ell^2(\widehat{X})$;
\item the map from $\ell^2({\widehat X})$ to $\ell^2(\Gamma)$ given by $k\mapsto k_\chi$  is a 
$\C[\Gamma]$-module morphism.
\end{enumerate}
\end{proposition}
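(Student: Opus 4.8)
The plan is to derive both assertions from the injectivity of the orbital map $\gamma \mapsto \gamma\chi$ supplied by Lemma~\ref{l:inj-orbit-mixing}, which applies in our situation precisely because $\Gamma$ is torsion-free, $(X,\Gamma)$ is mixing, and $\chi$ is non-trivial. The square-summability in (i) will be the only point with genuine content; the module-morphism claim in (ii) will reduce to a formal equivariance check once $\C$-linearity is noted.

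For (i), I would first invoke Lemma~\ref{l:inj-orbit-mixing} to conclude that the stabilizer of $\chi$ is trivial, so that $\gamma \mapsto \gamma\chi$ is an injection of $\Gamma$ onto the orbit $\Gamma\chi \subset \widehat{X}$, hitting each point of $\Gamma\chi$ exactly once. This reparametrization then gives
\[
\sum_{\gamma \in \Gamma} |k_\chi(\gamma)|^2 = \sum_{\gamma \in \Gamma} |k(\gamma\chi)|^2 = \sum_{\psi \in \Gamma\chi} |k(\psi)|^2 \leq \sum_{\psi \in \widehat{X}} |k(\psi)|^2 = \|k\|^2 < \infty,
\]
so $k_\chi \in \ell^2(\Gamma)$ with $\|k_\chi\| \leq \|k\|$. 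This estimate simultaneously shows that $k \mapsto k_\chi$ is a bounded linear map from $\ell^2(\widehat{X})$ to $\ell^2(\Gamma)$, the $\C$-linearity being immediate from the defining formula~\eqref{e:k-sub-*}.

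For (ii), having $\C$-linearity in hand, it remains only to check that $k \mapsto k_\chi$ intertwines the two $\Gamma$-actions, since being $\C$-linear and $\Gamma$-equivariant is exactly what it means to be a $\C[\Gamma]$-module morphism. I would fix $\gamma' \in \Gamma$, evaluate at an arbitrary $\gamma \in \Gamma$, and use the action $(\gamma' k)(\psi) = k(\gamma'^{-1}\psi)$ on $\ell^2(\widehat{X})$ together with the regular representation $(\gamma' u)(\gamma) = u(\gamma'^{-1}\gamma)$ on $\ell^2(\Gamma)$ to obtain
\[
(\gamma' k)_\chi(\gamma) = (\gamma' k)(\gamma\chi) = k(\gamma'^{-1}\gamma\chi) = k_\chi(\gamma'^{-1}\gamma) = \big(\gamma'(k_\chi)\big)(\gamma),
\]
whence $(\gamma' k)_\chi = \gamma'(k_\chi)$, as required.

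The main obstacle, such as it is, sits entirely in (i) and rests on the triviality of the stabilizer: were $\chi$ fixed by some $\gamma_0 \neq 1_\Gamma$, torsion-freeness would force $\gamma_0$ to have infinite order, so $k_\chi$ would take the constant value $k(\chi)$ along the infinite set $\{\gamma_0^n : n \in \Z\}$ and hence fail to lie in $\ell^2(\Gamma)$ whenever $k(\chi) \neq 0$. Thus the mixing hypothesis, channelled through Lemma~\ref{l:inj-orbit-mixing}, is exactly what guarantees square-summability, while (ii) is a purely formal computation.
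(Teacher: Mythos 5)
Your proof is correct and follows essentially the same route as the paper: both invoke Lemma~\ref{l:inj-orbit-mixing} to get injectivity of $\gamma \mapsto \gamma\chi$, bound $\sum_\gamma |k(\gamma\chi)|^2$ by $\|k\|^2$ for (i), and verify $(\gamma' k)_\chi = \gamma'(k_\chi)$ by the same one-line computation, with linearity handling the rest of (ii). The closing remark on why a non-trivial stabilizer would break square-summability is a nice extra but not needed.
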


\begin{proof}
Let $k \in \ell^2({\widehat X})$.
It follows from Lemma~\ref{l:inj-orbit-mixing} that the map $\gamma \mapsto \gamma \chi$ is injective. As a consequence, we have that
\[
\sum_{\gamma \in \Gamma} \vert k_\chi(\gamma) \vert^2 
= \sum_{\gamma \in \Gamma} \vert k(\gamma \chi) \vert^2 
\leq
\sum_{\chi' \in \widehat{X}} \vert k(\chi') \vert^2  
< \infty.
\] 
This shows (i). 
\par
On the other hand, for all $\gamma, \gamma' \in \Gamma$, we have that
\[
(\gamma k)_\chi(\gamma') 
= (\gamma k)(\gamma' \chi) = k(\gamma^{-1}\gamma' \chi) =
k_\chi(\gamma^{-1} \gamma') = (\gamma k_\chi)(\gamma').
\]
This shows that $(\gamma k)_\chi = \gamma k_\chi$.
Therefore the map $k\mapsto k_\chi$ is $\Gamma$-equivariant 
and (ii) follows by linearity.
\end{proof}

\section{Topological rigidity and the $\ell^2$-zero-divisor conjecture}
\label{sec:l2-zdc}

The goal of this section is to establish the following  rigidity result.

\begin{theorem}
\label{t:rigidity-l2}
Let $X$ and $Y$ be compact connected metrizable abelian groups
equipped with an action of a countable group $\Gamma$ by continuous group automorphisms.
Suppose that the following conditions hold:
\begin{enumerate}[{\rm (i)}]
\item $\Gamma$ satisfies the $\ell^2$-zero-divisor conjecture;
\item $(X,\Gamma)$ is mixing;
\item ${\widehat Y}$ is a torsion $\Z[\Gamma]$-module.
\end{enumerate}
Then every $\Gamma$-equivariant continuous map $f \colon X\rightarrow Y$ is affine.
\end{theorem}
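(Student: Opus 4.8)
The plan is to reduce, via Corollary~\ref{c:char-affine}, to showing that $\xi \circ f$ is affine for every character $\xi \in \widehat Y$, and then to linearize the problem through van Kampen's lifting theorem so that the $\ell^2$-zero-divisor conjecture can be applied. First I would fix $\xi \in \widehat Y$ and set $q \coloneqq \xi \circ f \in C(X,\T)$. Applying Theorem~\ref{t:van-kampen-lift} produces the decomposition $q(x) = t_q \chi_q(x) e^{2\pi i h_q(x)}$ with $t_q \in \T$, $\chi_q \in \widehat X$, and $h_q \in C_0(X,\R)$. By the uniqueness clause of that theorem, $q$ is affine if and only if $h_q = 0$: indeed, if $h_q = 0$ then $q = t_q \chi_q$ is a character times a constant, while any affine $q$ has trivial correction term $h_q$. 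Thus the entire problem reduces to proving that $h_q \equiv 0$.

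Next I would assemble the relevant maps as $\Z[\Gamma]$-module morphisms. Using the $\Gamma$-equivariance of $f$, one checks that $\Phi \colon \widehat Y \to C(X,\T)$, $\xi \mapsto \xi \circ f$, is a $\Z[\Gamma]$-module morphism, where $C(X,\T)$ carries the module structure described before Lemma~\ref{l:h-equivariante}; additivity is the identity $(\xi_1 + \xi_2)\circ f = (\xi_1\circ f)(\xi_2 \circ f)$, and equivariance follows from $(\gamma \xi)\circ f = \gamma(\xi \circ f)$. Composing $\Phi$ with the morphism $q \mapsto h_q$ of Lemma~\ref{l:h-equivariante} gives a $\Z[\Gamma]$-module morphism $\widehat Y \to C_0(X,\R)$, $\xi \mapsto h_{\xi \circ f}$. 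Since $\widehat Y$ is a torsion $\Z[\Gamma]$-module by hypothesis~(iii), there is a nonzero $a \in \Z[\Gamma]$ with $a\xi = 0$; applying the composite morphism then yields $a \cdot h_q = 0$ in $C_0(X,\R)$.

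Finally I would push this annihilation relation into $\ell^2(\Gamma)$, where the hypothesis can bite. Note first that~(i) forces $\Gamma$ to be torsion-free, so that Lemma~\ref{l:inj-orbit-mixing} and Proposition~\ref{p:k-*} are available under the mixing assumption~(ii). Because $\FF_X$ is a $\C[\Gamma]$-module isomorphism, the element $k \coloneqq \FF_X(h_q) \in \ell^2(\widehat X)$ satisfies $a k = 0$. For each nontrivial character $\chi \in \widehat X$, Proposition~\ref{p:k-*} provides a $\C[\Gamma]$-module morphism $k \mapsto k_\chi$ into $\ell^2(\Gamma)$, so that $a k_\chi = 0$; since $a \neq 0$ in $\C[\Gamma]$ and $\ell^2(\Gamma)$ is torsion-free by~(i), we conclude $k_\chi = 0$, and in particular $k(\chi) = k_\chi(1_\Gamma) = 0$. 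Hence every Fourier coefficient of $h_q$ at a nontrivial character vanishes, so $h_q$ agrees $\mu$-almost everywhere with a constant; continuity of $h_q$ together with $h_q(0_X) = 0$ and the full support of Haar measure then force $h_q \equiv 0$. This gives that $q = \xi \circ f$ is affine for every $\xi$, and Corollary~\ref{c:char-affine} finishes the proof.

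The main obstacle, as I see it, is the transfer of the purely algebraic relation $a \cdot h_q = 0$ into a setting where the analytic hypothesis (the $\ell^2$-zero-divisor conjecture, a statement about $\ell^2(\Gamma)$ rather than about functions on $X$) can be invoked; this is precisely what the Fourier transform followed by Proposition~\ref{p:k-*} accomplishes, turning the coefficient function $k = \FF_X(h_q)$ restricted to each $\Gamma$-orbit in $\widehat X$ into a genuine element of $\ell^2(\Gamma)$. A secondary technical point is that Proposition~\ref{p:k-*} only controls the orbit of a \emph{nontrivial} character, so the trivial character must be handled separately by the continuity argument above rather than by torsion-freeness.
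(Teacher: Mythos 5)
Your proof is correct and follows essentially the same route as the paper's: reduction via Corollary~\ref{c:char-affine}, the van Kampen decomposition combined with Lemma~\ref{l:h-equivariante} to obtain $a\cdot h_q=0$, and then the Fourier transform together with Proposition~\ref{p:k-*} and the $\ell^2$-zero-divisor conjecture to force $h_q=0$. The only cosmetic difference is that you inline the final step, which the paper isolates as Lemma~\ref{l:c0-x-c-tf}.
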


\begin{remark}
In the particular case $\Gamma = \Z^d$, Theorem~\ref{t:rigidity-l2}
can be deduced  from \cite[Theorem~1.1]{bhattacharya-ward}.
\end{remark}

\begin{remark}
When $Y$ is a compact metrizable abelian group and $\Gamma$ is  a countable amenable group acting on $Y$  by continuous group automorphisms,
it is known (cf. \cite[Theorem~2.2]{deninger-entropy}) that the topological entropy of $(Y, \Gamma)$ coincides with its measure-theoretic entropy with respect to the Haar measure on $Y$.
Furthermore, when $\Gamma = \Z^d$ and $\widehat{Y}$ is finitely generated as a $\Z[\Gamma]$-module, 
it is shown in \cite[Lemma~2.2]{bhattacharya-ward} that $(Y,\Gamma)$ has finite topological entropy 
if and only if $\widehat{Y}$ is a torsion $\Z[\Gamma]$-module.
When $\Gamma$ is a general countable amenable group,
it is still true that $\widehat{Y}$ is a torsion $\Z[\Gamma]$-module if $(Y,\Gamma)$ has finite topological entropy.
Indeed, if $\chi \in \widehat{Y}$ has no $\Z[\Gamma]$-torsion, then the cyclic $\Z[\Gamma]$-submodule $M \subset \widehat{Y}$ generated by $\chi$ is isomorphic to $\Z[\Gamma]$. 
Thus $(\widehat{M},\Gamma)$ is conjugate to the $\Gamma$-shift on $\T^\Gamma$ and hence has infinite topological entropy.
As $(\widehat{M},\Gamma)$ is a factor of $(Y,\Gamma)$, this implies that $(Y,\Gamma)$ has infinite topological entropy as well.   
\end{remark}

For the proof of Theorem~\ref{t:rigidity-l2}, we shall use the following auxiliary result.

\begin{lemma}
\label{l:c0-x-c-tf}
Let $X$ be a compact metrizable abelian group equipped with an action of a countable group $\Gamma$ by continuous group automorphisms.
We equip $C_0(X,\C)$ with the $\C[\Gamma]$-module structure induced by the inclusion 
$C_0(X,\C) \subset L^2(X)$.
Suppose that the group $\Gamma$ satisfies the $\ell^2$-zero-divisor conjecture and that 
$(X,\Gamma)$ is mixing. 
Then  $C_0(X,\C)$ is torsion-free as a $\C[\Gamma]$-module.
\end{lemma}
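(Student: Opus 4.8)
The plan is to transport the torsion equation to the Fourier side and then kill it one $\Gamma$-orbit of characters at a time, reducing torsion-freeness of $C_0(X,\C)$ to that of $\ell^2(\Gamma)$. First I would record that $\Gamma$ is torsion-free: the $\ell^2$-zero-divisor conjecture implies Kaplansky's zero-divisor conjecture, which as recalled above forces $\Gamma$ to have no non-trivial element of finite order. This is exactly what allows me to invoke Proposition~\ref{p:k-*} (and, behind it, Lemma~\ref{l:inj-orbit-mixing}), whose standing hypotheses are precisely that $\Gamma$ be torsion-free and $(X,\Gamma)$ be mixing.

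Suppose then that $a \in \C[\Gamma]$ is non-zero and $f \in C_0(X,\C)$ satisfies $a f = 0$; the goal is $f = 0$. Since the Fourier transform $\FF_X \colon L^2(X) \to \ell^2(\widehat{X})$ is a $\C[\Gamma]$-module isomorphism, writing $\widehat{f} \coloneqq \FF_X(f)$ the hypothesis becomes $a \widehat{f} = 0$ in $\ell^2(\widehat{X})$. Now fix an arbitrary non-trivial character $\chi \in \widehat{X}$. By Proposition~\ref{p:k-*}, the assignment $k \mapsto k_\chi$, with $k_\chi(\gamma) = k(\gamma \chi)$, is a $\C[\Gamma]$-module morphism from $\ell^2(\widehat{X})$ into $\ell^2(\Gamma)$. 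Applying it to the relation $a \widehat{f} = 0$ gives $a \cdot (\widehat{f})_\chi = (a \widehat{f})_\chi = 0$ in $\ell^2(\Gamma)$. Because $\Gamma$ satisfies the $\ell^2$-zero-divisor conjecture, $\ell^2(\Gamma)$ is torsion-free, and since $a \neq 0$ we conclude $(\widehat{f})_\chi = 0$, that is, $\widehat{f}(\gamma \chi) = 0$ for all $\gamma \in \Gamma$. Thus $\widehat{f}$ vanishes on the entire $\Gamma$-orbit of $\chi$. As $\chi$ ranges over all non-trivial characters and every such character lies in its own orbit, $\widehat{f}$ must be supported on the trivial character alone, i.e. $\widehat{f} = \langle f, 1 \rangle\, e_1$. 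Pulling this back through $\FF_X$, the function $f$ is the constant function with value $\langle f, 1 \rangle$.

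The last step is where the normalization $f \in C_0(X,\C)$, rather than merely $f \in L^2(X)$, is decisive: a constant function $f$ with $f(0_X) = 0_\C$ is identically zero, so $\langle f, 1\rangle = 0$ and $f = 0$, as required. I expect the only genuinely delicate point to be precisely this bookkeeping at the trivial character. Indeed, $L^2(X)$ itself is never torsion-free when $X \neq \{0_X\}$, since the trivial character is a $\Gamma$-fixed vector and hence $(1 - \gamma) \cdot 1 = 0$ for every $\gamma \neq 1_\Gamma$; it is exactly the condition $f(0_X) = 0_\C$ that excises this offending constant component and thereby restores torsion-freeness on $C_0(X,\C)$.
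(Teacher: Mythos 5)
Your proposal is correct and follows essentially the same route as the paper's proof: Fourier transform, restriction to the $\Gamma$-orbit of a non-trivial character via $k \mapsto k_\chi$, the $\ell^2$-zero-divisor hypothesis to kill $(\widehat{f})_\chi$, and the normalization $f(0_X)=0$ to dispose of the constant term. Your explicit verification that $\Gamma$ is torsion-free (needed to invoke Proposition~\ref{p:k-*}) is a hypothesis check the paper leaves implicit, and is a welcome addition.
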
 

\begin{proof}
Let  $h \in C_0(X,\C)$ such that $p h = 0$ for some non-zero element $p \in \C[\Gamma]$. 
We want to show that $h = 0$.
As remarked above, $h \in L^{2}(X)$.
As the Fourier transform $\FF_X \colon L^2(X) \to \ell^2(\widehat{X})$ is a $\C[\Gamma]$-module morphism,
we have that
$$
p\FF_X(h) = \FF_X(p  h) = \FF_X(0)  = 0.
$$
Now fix a non-trivial character $\chi$ of $X$ and consider the element
$(\FF_X(h))_\chi \in \ell^2(\Gamma)$ defined as in~\eqref{e:k-sub-*}.
Using the fact that  the map from $\ell^2(\widehat{X})$ to $\ell^2(\Gamma)$ given by 
$k \mapsto k_\chi$ is a $\C[\Gamma]$-module morphism (see Proposition~\ref{p:k-*}.(ii)),
we get
\[
p( \FF_X(h))_\chi) = (p  \FF_X(h))_\chi = 0_\chi = 0.
\] 
Since $\Gamma$ satisfies the $\ell^2$-zero-divisor conjecture and $p \not= 0$, this 
implies that $(\FF_X (h))_\chi = 0$.
We deduce that $(\FF_X(h))(\chi) = ((\FF_X(h))_\chi)(1_\Gamma) = 0$.
As $\chi$ was an arbitrary non-trivial character of $X$, it follows that $h$ is a constant.
But $h(0_X) = 0$ since $h \in C_0(X,\C)$. Therefore  $h = 0$.
\end{proof}

\begin{proof}[Proof of Theorem~\ref{t:rigidity-l2}]
Let $f \colon X\to Y$ be a $\Gamma$-equivariant continuous  map.
We want to prove that $f$ is affine.
By Corollary~\ref{c:char-affine},
it is enough to show that $\xi \circ f$ is affine for every character $\xi$ of $Y$.
So let us fix some character $\xi \in \widehat{Y}$ and consider the map
$\xi \circ f \in C(X,\T)$.
By the van Kampen lifting theorem (cf. Theorem~\ref{t:van-kampen-lift}), there exist
 $t \in \T$, $\chi \in {\widehat X}$, and
$h\in C_0(X,\R)$ such that
\begin{equation}
\label{e:vanKampen-2}
\xi \circ f (x) = t \chi(x) e^{2\pi i h(x)}
\end{equation}
for all $x \in X$.
\par
On the other hand, since $\widehat{Y}$ is a torsion $\Z[\Gamma]$-module,
there is a  
non-zero element $p\in \Z[\Gamma]$
such that $p \xi = 0$.
\par
As $f$ is $\Gamma$-equivariant, we have, for all $\gamma \in \Gamma$ and $x \in X$,
\[
(\gamma(\xi \circ f))(x) = \xi(f(\gamma^{-1}x)) = \xi(\gamma^{-1}f(x)) = ((\gamma \xi) \circ f)(x).
\]
This shows that $\gamma(\xi \circ f) = (\gamma \xi) \circ f$. 
By linearity, one has more generally $r(\xi \circ f) = (r \xi) \circ f$ for all $r \in \Z[\Gamma]$.
\par
We deduce that $p (\xi \circ f) = 0$.
\par
By applying Lemma~\ref{l:h-equivariante}, 
we deduce that $p h = 0$.
It follows that  $h = 0$ by Lemma~\ref{l:c0-x-c-tf}.
Formula~\eqref{e:vanKampen-2} now becomes
$$
\xi \circ f (x) = t \chi(x) \quad \text{for all  }x \in X.
$$
This shows that the map $\xi \circ f$ is affine and completes the proof. 
\end{proof}

As an immediate consequence of Theorem~\ref{t:rigidity-l2}, we get the following.

\begin{corollary}
\label{c:rigidity-l2}
Let $X$  be a compact connected metrizable abelian group
equipped with an action of a countable group $\Gamma$ by continuous group automorphisms.
Suppose that the following conditions hold:
\begin{enumerate}[{\rm (i)}]
\item $\Gamma$ satisfies the $\ell^2$-zero-divisor conjecture;
\item $(X,\Gamma)$ is mixing;
\item ${\widehat X}$ is a torsion $\Z[\Gamma]$-module.
\end{enumerate}
Then the algebraic dynamical system $(X,\Gamma)$ is topologically rigid.
\qed
\end{corollary}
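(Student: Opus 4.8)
The plan is to derive the corollary as the special case $Y = X$ of Theorem~\ref{t:rigidity-l2}, so that essentially no new argument is required beyond checking that the hypotheses of the theorem specialize correctly and that its conclusion coincides with the definition of topological rigidity given above.

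First I would set $Y \coloneqq X$, equipped with the same action of $\Gamma$ by continuous group automorphisms. Since $X$ is by assumption a compact connected metrizable abelian group carrying such an action, both $X$ and $Y$ satisfy the standing hypotheses imposed on the two groups in Theorem~\ref{t:rigidity-l2}. Next I would match the three numbered conditions: hypotheses (i) and (ii) of the corollary — that $\Gamma$ satisfies the $\ell^2$-zero-divisor conjecture and that $(X,\Gamma)$ is mixing — are verbatim the hypotheses (i) and (ii) of the theorem. Hypothesis (iii) of the corollary asserts that $\widehat{X}$ is a torsion $\Z[\Gamma]$-module; under the identification $Y = X$ this is exactly hypothesis (iii) of the theorem, which requires $\widehat{Y}$ to be torsion.

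With all three hypotheses verified, Theorem~\ref{t:rigidity-l2} applies and yields that every $\Gamma$-equivariant continuous map $f \colon X \to Y = X$ is affine. This is precisely the statement that the algebraic dynamical system $(X,\Gamma)$ is topologically rigid, as defined in the discussion preceding Corollary~\ref{c:rigid-adcc-are-surj}. I expect no genuine obstacle here: the entire mathematical content resides in Theorem~\ref{t:rigidity-l2}, and the only point to confirm is the bookkeeping one that the self-map condition defining topological rigidity is nothing but the $Y = X$ instance of the map condition appearing in the theorem.
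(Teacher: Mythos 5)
Your proposal is correct and is exactly how the paper obtains this corollary: the paper states it as an immediate consequence of Theorem~\ref{t:rigidity-l2} with $Y = X$, which is precisely your specialization. The hypothesis matching and the identification of the conclusion with the definition of topological rigidity are all that is needed, and you have checked both.
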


\section{Proofs}
\label{sec:proofs}
In this section, we provide the proofs of the statements presented in the Introduction.  
For the proof of Theorem~\ref{t:th1}, we shall use the following result.

\begin{lemma}
\label{l:inj-endo-surj}
Let $X$ be a compact connected metrizable abelian group with finite topological dimension.
Then every injective continuous group endomorphism of $X$ is surjective.
\end{lemma}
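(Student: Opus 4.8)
The plan is to dualize everything via Pontryagin duality and reduce the claim to a purely algebraic statement about finite-rank torsion-free abelian groups. Let $a \colon X \to X$ be an injective continuous group endomorphism. Since $X$ is compact and metrizable, its dual $\widehat{X}$ is a discrete countable abelian group; moreover, the connectedness of $X$ forces $\widehat{X}$ to be torsion-free, while the finiteness of the topological dimension of $X$ means precisely that $\widehat{X}$ has finite rank $n$ (all these facts are recalled in Subsection~\ref{subsec:pontryagin}). The dual endomorphism $\widehat{a} \colon \widehat{X} \to \widehat{X}$ is a group homomorphism, and, $X$ being compact (equivalently, $\widehat{X}$ discrete), the injectivity of $a$ is equivalent to the surjectivity of $\widehat{a}$, whereas the surjectivity of $a$ that we wish to establish is equivalent to the injectivity of $\widehat{a}$.

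Thus the statement reduces to the following algebraic fact: every surjective endomorphism $\phi$ of a finite-rank torsion-free abelian group $A$ is injective. First I would tensor with $\Q$. Setting $V \coloneqq A \otimes_{\Z} \Q$, this is a $\Q$-vector space whose dimension equals the rank of $A$, hence finite, and $\phi$ induces a $\Q$-linear map $\phi_\Q \coloneqq \phi \otimes \mathrm{id}_\Q \colon V \to V$. Since tensoring is right exact, $\phi_\Q$ is surjective; and a surjective linear endomorphism of a finite-dimensional vector space is bijective, so $\phi_\Q$ is in particular injective.

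Finally I would transport the injectivity of $\phi_\Q$ back to $\phi$. Because $A$ is torsion-free, the canonical map $j \colon A \to V$, $x \mapsto x \otimes 1$, is injective, and it sits in the commutative square relating $\phi$ and $\phi_\Q$ (that is, $\phi_\Q \circ j = j \circ \phi$). Hence if $x \in \ker \phi$, then $j(x) = j(\phi(x)) \cdot 0^{-1}$ lies in $\ker \phi_\Q = \{0\}$, so $j(x) = 0$ and therefore $x = 0$ by injectivity of $j$. This shows $\phi$ is injective, whence $\widehat{a}$ is injective and $a$ is surjective, as desired.

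I do not anticipate a serious obstacle: the crux is simply the observation that a surjective self-map of an $n$-dimensional vector space cannot have a nontrivial kernel. The only points requiring care lie in the reduction step, where one must correctly invoke the dictionary between topological properties of $X$ and algebraic properties of $\widehat{X}$ — specifically that finite topological dimension corresponds to finite rank, and that for morphisms between two compact (equivalently, two discrete) groups injectivity dualizes to surjectivity and vice versa.
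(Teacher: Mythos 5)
Your proposal is correct and follows essentially the same route as the paper: dualize, tensor $\widehat{X}$ with $\Q$ to get a finite-dimensional vector space, use the fact that a surjective linear endomorphism of such a space is injective, and pull injectivity back along the embedding $\widehat{X} \hookrightarrow \widehat{X} \otimes_{\Z} \Q$ afforded by torsion-freeness. (Only the garbled expression ``$j(x) = j(\phi(x)) \cdot 0^{-1}$'' should be replaced by the intended $\phi_\Q(j(x)) = j(\phi(x)) = 0$.)
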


\begin{proof}
Let $a \colon X \to X$ be an injective continuous group morphism and let $n$ denote the topological dimension of $X$.
Then the Pontryagin dual $\widehat{X}$ of $X$ is a discrete torsion-free group with rank $n$.
Therefore $V \coloneqq \widehat{X} \otimes_\Z \Q$ is an $n$-dimensional $\Q$-vector space and
$\widehat{X}$ embeds as a subgroup of $V$ via the map $\chi \mapsto  \chi \otimes_\Z 1$.
Furthermore, the dual endomorphism $\widehat{a}$ uniquely extends to a $\Q$-linear map 
$$\widehat{a}_\Q \coloneqq a \otimes_\Z \Id_{\Q} \colon V \to V.$$
The injectivity of $a$ implies the
surjectivity of $\widehat{a}$
(see~\cite[Proposition~30]{morris}) 
and hence the surjectivity of $\widehat{a}_\Q$.
Since every surjective endomorphism of a finite-dimensional vector space is injective,  it follows that
$\widehat{a}_\Q$ is injective.
As $\widehat{a}$ is the restriction of $\widehat{a}_\Q$ to $\widehat{X}$, we deduce that 
$\widehat{a}$ is itself injective and therefore we conclude that
$a = \widehat{\widehat{a}}$ is surjective.
  \end{proof}

\begin{proof}[Proof of Theorem~\ref{t:th1}]
As pointed out in the Introduction, the group $X$ is abelian by \cite[Theorem~3.2]{lam}.
Let $\tau \colon X \to X$ be an injective   $\Gamma$-equivariant continuous map.
It follows from  a rigidity result of Bhattacharya \cite[Corollary~1]{bhattacharya}  that $\tau$ is an affine map. Therefore there exist a continuous group endomorphism
$a  \colon X \to X$ and an element $b \in X$ such that
$\tau(x) = a(x) + b$ for all $x \in X$.
The injectivity of $\tau$ implies    that of $a$.
Applying Lemma~\ref{l:inj-endo-surj}, we deduce that $a$ is surjective.
This  implies that $\tau$ is surjective as well. 
\end{proof}

\begin{proof}[Proof of Theorem~\ref{t:th2}]
It follows from a result of Kitchens and Schmidt
\cite[Corollary~7.4]{kitchens-schmidt-1989} that, under theses hypotheses, the $\Z^d$-periodic points are dense in $X$.
On the other hand, as already mentioned in the Introduction,  it is known 
\cite[Proposition~5.1]{csc-ijm-2015} that every expansive dynamical system 
admitting a dense set of periodic points is surjunctive. 
\end{proof}

\begin{proof}[Proof of Theorem~\ref{t:th3}]
Let us set $\Gamma \coloneqq \Z^d$ and let $\tau \colon X \to X$ be an injective   $\Gamma$-equivariant continuous map.
Let $\Lambda \subset \Gamma$ be a subgroup of finite index and denote by
$X(\Lambda)\subset X$ the closed subgroup consisting  
of all the points of $X$ that are fixed by  $\Lambda$. 
Observe that $X(\Lambda)$ is  $\Gamma$-invariant since $\Gamma$ is abelian.
As a $\Z[\Gamma]$-module, 
the Pontryagin dual
$\widehat{X(\Lambda)}$
is a quotient of $\widehat{X}$.
On the other hand, since $(X,\Gamma)$ satisfies the a.d.c.c.\  by our hypotheses, the $\Z[\Gamma]$-module $\widehat{X}$ is Noetherian and hence finitely generated.
Therefore $\widehat{X(\Lambda)}$ 
 is a finitely generated $\Z[\Gamma]$-module. As $\Lambda$ is of finite index in $\Gamma$, we deduce that  $\widehat{X(\Lambda)}$ is
also finitely generated as a $\Z[\Lambda]$-module.
This is the same as saying that $\widehat{X(\Lambda)}$ is a finitely generated abelian group
since the action of $\Lambda$ on $X(\Lambda)$ is trivial.
It follows that there exist an integer $k \geq 0$ and a finite abelian group $F$ such that the group 
$\widehat{X(\Lambda)}$
is isomorphic to $\Z^k \times F$.
By dualizing,
we deduce that $X(\Lambda)$ is homeomorphic to $\T^k \times F$.
Now, $\tau$ is $\Gamma$-equivariant so that $\tau(X(\Lambda))  \subset X(\Lambda)$. 
As $\tau$ is injective and $\T^k \times F$ is incompressible, it follows that $\tau(X(\Lambda)) = X(\Lambda)$.
Since the union of the sets $X(\Lambda)$, as $\Lambda$ varies over all finite index subgroups of $\Gamma = \Z^d$, is dense by \cite[Theorem 7.2]{kitchens-schmidt-1989} (see also \cite[Theorem  11.2]{schmidt-book}), we conclude  that the map $\tau$ is surjective.
\end{proof}

\begin{proof}[Proof of Theorem~\ref{t:th4}]
The algebraic dynamical system $(X,\Gamma)$ is topologically rigid
by Corollary~\ref{c:rigidity-l2} and satisfies the a.d.c.c.\ since $\widehat{X}$ is assumed to be Noetherian as a $\Z[\Gamma]$-module.
Therefore $(X,\Gamma)$ is surjunctive by Corollary~\ref{c:rigid-adcc-are-surj}.   
\end{proof}

\begin{proof}[Proof of Theorem~\ref{t:poly-abel}]
Suppose first  that $\Gamma$ is polycyclic-by-finite.
Then $\Gamma$ is solvable-by-finite and hence elementary amenable.
On the other hand,  the integral group ring $\Z[\Gamma]$ is Noetherian (see \cite{hall}, \cite{passman}). 
As $\widehat{X}$ is finitely generated as a $\Z[\Gamma]$-module, 
it follows that $\widehat{X}$ is Noetherian.
Thus $(X,\Gamma)$ is surjunctive by Theorem~\ref{t:th4}.
\par
Suppose now that $\Gamma$ is abelian.
Let $f \colon X \to X$ be an injective $\Gamma$-equivariant continuous map. 
Since every  abelian group is elementary amenable,
we deduce from  Theorem~\ref{t:rigidity-l2} 
that $f$ is affine.
Thus there exist a continuous group morphism $a \colon X \to X$ and an element $b \in X$ such that 
$f(x) = a(x) + b$ for all $x \in X$.
Observe that $a$ is injective since $f$ is.
Consequently,   its dual $\widehat{a}$  is a surjective endomorphism of the
$\Z[\Gamma]$-module $\widehat{X}$.
Now,
by a theorem of Vasconcelos \cite[Proposition~1.2]{vasconcelos} (see also 
\cite[Theorem~2.4]{matsumura} for a proof based on Nakayama's lemma), it is known that if $R$ is a commutative ring, then
every surjective endomorphism of a finitely generated $R$-module  is injective.
Since the  ring $\Z[\Gamma]$ is commutative, it follows that $\widehat{a}$ is injective and hence that 
$a = \widehat{\widehat{a}}$ is surjective.
This implies that $f$ itself is surjective.
\end{proof}

\section{Examples and concluding remarks}

  \begin{example}
Consider the infinite-dimensional torus    $X \coloneqq \T^\N$.
The map $a \colon X \to X$, given by
$a(x)(0) \coloneqq 0$ and $  a(x)(n) \coloneqq x(n - 1)$  for all $x \in X$ and $n \geq 1$,
   is a continuous injective group endomorphism of $X$ that is not surjective
   (this shows that the hypothesis that $X$ is finite-dimensional cannot be removed from Lemma~\ref{l:inj-endo-surj}).
Then, for any countable group $\Gamma$,  the $\Gamma$-shift $(X^\Gamma,\Gamma)$ yields an example of a non-surjunctive mixing algebraic dynamical system whose phase space 
is connected and abelian.
  \end{example}
  
 \begin{example}
Let $p$ be any prime and  thake $X \coloneqq \Z_p$, the group of $p$-adic integers.
Then $X$ is a $0$-dimensional compact metrizable abelian group.    
The map $a \colon X \to X$, defined by  $a(x) \coloneqq p x$ for all $x \in X$, is an injective continuous group endomorphism of $X$ that is not surjective
   (this shows that the hypothesis that $X$ is connected cannot be removed from 
   Lemma~\ref{l:inj-endo-surj}).
For any countable group $\Gamma$,  the $\Gamma$-shift $(X^\Gamma,\Gamma)$ yields an example of a non-surjunctive mixing algebraic dynamical system whose phase space 
is $0$-dimensional and abelian.
\end{example}

\begin{remark}
Suppose that $\Gamma$ is a countable residually finite group and  $S$ a compact metrizable space. Then  periodic points for the $\Gamma$-shift are dense in $S^\Gamma$.
Indeed, let $x \in S^\Gamma$ and $N \subset S^\Gamma$ a neighborhood of $x$.
By definition of the product topology, there is a finite set $\Omega \subset \Gamma$ such that $N$ contains all $y \in S^\Gamma$ that satisfy $y|_\Omega = x|_\Omega$.
Since $\Gamma$ is residually finite, we can find a finite index subgroup $\Lambda$ of $\Gamma$ such that
no two distinct elements of $\Omega$ belong to the same right coset of $\Lambda$.
Consequently, we can find a complete set of representatives $R \subset \Gamma$ for the right cosets of $\Lambda$ such that $\Omega \subset R$.
Then the point  $y \in S^\Gamma$,
defined by 
   $y(\lambda r) \coloneqq x(r)$ for all $\lambda \in \Lambda$ and $r \in R$,
   is in $N$. On the other hand, $y$ is periodic since it is fixed by $\Lambda$.
This proves that periodic points are dense in $S^\Gamma$.
Using the two previous Examples,
 we deduce that for any countable residually finite group $\Gamma$ (e.g. $\Gamma = \Z^d$), there exist
mixing  algebraic dynamical systems $(X,\Gamma)$ 
with a dense set of periodic points and  $X$ connected (resp.~$0$-dimensional)
that are not surjunctive.
\end{remark}

\begin{example}[Principal algebraic actions]
Let $\Gamma$ be a countable group and let $f \in \Z[\Gamma]$.
Consider the cyclic left $\Z[\Gamma]$-module  $M_f \coloneqq  \Z[\Gamma]/\Z[\Gamma] f$
obtained by quotienting the integral group ring $\Z[\Gamma]$ by the principal left ideal generated by $f$ and let $X_f$ denote the Pontryagin dual of $M_f$.
Recall that $X_f$ is the set of all characters $x \colon M_f \to \T$ with the topology of pointwise convergence and that there is a continuous action of $\Gamma$ on 
$X_f$  satisfying  $(\gamma x) (m) = x(\gamma^{-1} m)$ for all $\gamma \in \Gamma$, $x \in X_f$, and $m \in M_f$.
Observe that $X_f$ is the closed shift-invariant subset of $\T^\Gamma$ given by
$$
X_f = \{ x \in \T^\Gamma : \sum_{\gamma \in \Gamma} f(\gamma' \gamma) x(\gamma) = 0 \text{ for all } \gamma' \in \Gamma \}.
$$
(here we  identify  the unit circle $\T$ with $\R/\Z$).
One says that $(X_f,\Gamma)$ is the \emph{principal} algebraic dynamical system associated with $f$
(see e.g.~\cite{einsiedler-rindler}, \cite{deninger-schmidt}, \cite{lind-schmidt-survey-heisenberg}).
There are several cases where we can say that
the algebraic dynamical system $(X_f,\Gamma)$ is surjunctive.
\par
Suppose first that $\Gamma = \Z^d$.
Then $\Z[\Gamma]$ can be identified with the ring $R_d \coloneqq \Z[u_1, u_1^{-1}, \dots, u_d, u_d^{-1}]$ of Laurent polynomials with integral coefficients on $d$ commuting indeterminates.
Since the ring $R_d$ is Noetherian, the algebraic dynamical system $(X_f,\Gamma)$ satisfies the a.d.c.c.\ so that it is surjunctive for every $f \in R_d$ by Theorem~\ref{t:th3}.
For $f = 1 + u_1 + u_2 \in R_2$, we get the \emph{connected Ledrappier subshift}, which is mixing but not expansive (cf. \cite[Example~5.6]{lind-schmidt-handbook}, \cite{schmidt-book}).
\par
Take again an arbitrary countable group $\Gamma$.
Then the vector space  $\ell^1(\Gamma)$ of all complex-valued summable functions on 
$\Gamma$ is a Banach algebra under the convolution product (cf. Formula~\eqref{e:convol})
containing $\Z[\Gamma]$ as a subring. 
Suppose from now on that $f \in \Z[\Gamma] $ is invertible in $\ell^1(\Gamma)$
(this is for example the case when  $f$ is \emph{lopsided}, i.e., there exists an element  
$\gamma_0 \in \Gamma$ such that
$|f(\gamma_0)| > \sum_{\gamma \not= \gamma_0} |f(\gamma)|$, 
cf.~\cite{lind-schmidt-survey-heisenberg}).
Then it is known that $(X_f,\Gamma)$ is expansive \cite[Theorem~3.2]{deninger-schmidt}.
   If in addition $\Gamma$ is residually finite (e.g. finitely generated nilpotent or, more generally, 
   polycyclic-by-finite), then the periodic points are dense in $X_f$
   by the first assertion in Proposition~8.4 in~\cite{lind-schmidt-survey-heisenberg} 
   (note that the hypothesis that $\Gamma$ is amenable is not needed for the proof therein), so that
    $(X_f,\Gamma)$ is surjunctive by~\cite[Proposition~5.1]{csc-ijm-2015}.
    On the other hand,  the fact that $f$ is invertible in $\ell^1(\Gamma)$
    also implies that $(X_f,\Gamma)$ is mixing
(cf.~\cite[Proposition~4.6]{lind-schmidt-survey-heisenberg}).
As $M_f$ is a torsion $\Z[\Gamma]$-module as soon as $f \not= 0$, we  deduce from Theorem~\ref{t:poly-abel} that $(X_f,\Gamma)$ is surjunctive whenever $X_f$ is connected and $\Gamma$ is a torsion-free abelian group  (e.g.~$\Gamma = \Q$). 
        \end{example}

It would be interesting to find examples of algebraic dynamical systems that are expansive or that satisfy the algebraic descending chain condition but are not surjunctive.

\end{document}